\newtheorem{theorem}{Theorem}[section]
\newtheorem{lemma}[theorem]{Lemma}
\newtheorem{proposition}[theorem]{Proposition}
\newcommand{\N}{\mathbb N}
\newcommand{\F}{\mathbb F}
\newcommand{\Q}{\mathbb Q}
\newcommand{\Z}{\mathbb Z}
\newcommand{\sti}[2]{\left[ \begin{array}{c}{#1} \\ {#2}\end{array} \right]}
\newcommand{\stiny}[2]{{\tiny \left[ \begin{array}{c}{#1} \\ {#2}\end{array} \right]}}
\begin{document}

	\title{On the number of monochromatic solutions of integer linear systems on Abelian groups}


	\author{Oriol Serra\thanks{Departament de Matem\`atica Aplicada 4,
	           Universitat Polit\`ecnica de Catalunya,
	           Barcelona, Spain.
	            Email: \texttt {oserra@ma4.upc.edu}}
	        \and
            Llu\'{i}s Vena\thanks{Department of Mathematics,
			  University of Toronto,
			    Toronto, Canada.
			 Email: \texttt{lluis.vena@utoronto.ca}}
			}
	\date{\today}
\maketitle

	\begin{abstract}
	Let $G$ be a finite abelian group with exponent $n$, and let $r$ be a positive integer. Let $A$ be a $k\times m$ matrix with integer entries. We show that if $A$ satisfies some natural conditions and $|G|$ is large enough then, for each   $r$--coloring of $G\setminus \{ 0\}$, there is $\delta$ depending only on $r,n$ and  $m$ such that the homogeneous linear system $Ax=0$ has at least $\delta |G|^{m-k}$ monochromatic solutions. Density versions of this counting result are also addressed.
	\end{abstract}


	\section{Introduction} \label{sec.intro}

	A central topic in Arithmetic Ramsey Theory is the study of monochromatic solutions of homogeneous linear systems in a colouring of the ambient group. Let $A$ be a $k\times m$ matrix with integer coefficients. The linear system $Ax=0$ is said to be  {\it partition regular} if every finite colouring of the nonzero integers  has a monochromatic solution of the system. A celebrated theorem by Rado~\cite{rad33}  characterizes such partition regular systems in the integers in terms of the so--called columns condition over $\Q$.

	More generally, we say that a $(k\times m)$ matrix $A$ with coefficients in a ring $R$ satisfies the
	\emph{columns condition} over  $R$ if we can order the column vectors $A^1, \ldots,
	A^m$ and find $1\leq k_1 < k_2 \cdots < k_t = m$ (with $k_0=0$) such that, if we
	set
	\begin{displaymath} 
	S^i=\sum_{j=k_{i-1}+1}^{k_i} A^j, 
	\end{displaymath} 
	we have that

	\begin{description} 
	\item[(i)] $S^1=0$ in $R^k$. 
	\item[(ii)] for $1<i\leq t$,
	$S^i$ can be expressed as a linear combination of $A^1,\ldots, A^{k_{i-1}}$
	with coefficients in $R$.
	\end{description}

	Deuber~\cite{deu75} extended Rado's characterization  to general abelian groups.
	Since the definition of partition regular systems involves all finite colourings, the statement holds only in infinite groups. A finitistic version of the Ramsey statement was given by Bergelson, Deuber and Hindman \cite{berdeuhind92} for vector spaces over finite fields.

	\begin{theorem}[Bergelson, Deuber and Hindman \cite{berdeuhind92}] \label{t.ff_1_solution}
	Let $F$
	be a finite field and $A$ a $(k\times m)$ matrix with coefficients
	in $F$. The following statements are equivalent.

	{\rm (i)} For every $r\in \N$ there is $n(r,|F|,m)\in N$ such that every
	$r$--coloring of $F^n\setminus \{ 0\}$ with $n\ge n(r,|F|,m)$ has a
	monochromatic solution of the homogeneous linear system $Ax=0$.

	{\rm (ii)} The matrix $A$ verifies the columns condition over $F$.
	\end{theorem}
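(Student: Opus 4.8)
The statement is an equivalence, so I would establish the two implications separately, the interesting content being the Ramsey-type implication (ii)$\Rightarrow$(i) and the construction of an unavoidable colouring for the contrapositive of (i)$\Rightarrow$(ii).

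For (ii)$\Rightarrow$(i), the plan is to turn the columns condition into an explicit family of solutions controlled by free parameters. Writing $S^{i}=\sum_{j=1}^{k_{i-1}}\lambda_{i j}A^{j}$ for $1<i\le t$ (possible by part (ii) of the columns condition), I would introduce parameters $y_1,\dots,y_t\in F^n$ and set, for each block $k_{i-1}<j\le k_i$,
\begin{equation*}
x_j \;=\; y_i \;-\; \sum_{i'=i+1}^{t}\lambda_{i' j}\,y_{i'}.
\end{equation*}
A direct computation, using $S^1=0$ and the defining relations for the $S^{i}$, shows that $\sum_{j=1}^{m}x_j A^{j}=\sum_{i=1}^{t}y_i S^{i}-\sum_{i'}y_{i'}S^{i'}=0$, so that $x=(x_1,\dots,x_m)$ solves $Ax=0$ for \emph{every} choice of the $y_i$. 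The problem is thereby reduced to a purely colour-theoretic one: given an $r$--colouring of $F^n\setminus\{0\}$, find parameters $y_1,\dots,y_t$ for which all $m$ vectors $x_j$ are nonzero and receive the same colour. Since each $x_j$ is an $F$--linear form in $(y_1,\dots,y_t)$, I would obtain such parameters from a Ramsey theorem for finite vector spaces --- the Hales--Jewett theorem (equivalently, the Graham--Leeb--Rothschild theorem on monochromatic combinatorial subspaces) --- which guarantees, once $n\ge n(r,|F|,m)$, a monochromatic $t$--dimensional combinatorial subspace of $F^n$ inside which the parameters can be chosen.

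The main obstacle in this implication is precisely the matching step. A monochromatic combinatorial subspace is the image of an affine map $\lambda\mapsto w_0+\sum_p\lambda_p u_p$, whereas the forms $x_j$ above are linear but not affine in the $y_i$ (their coefficients need not sum to $1$, since $1-\sum_{i'>i}\lambda_{i'j}$ is arbitrary). I would therefore need a normalisation --- either introducing one extra parameter to absorb the base point $w_0$, or exploiting the freedom coming from $S^1=0$, which lets one shift all block-$1$ variables by a common vector without affecting $Ax$ --- and simultaneously arrange that none of the $x_j$ vanishes, so that the relevant class lies in $F^n\setminus\{0\}$ rather than in $F^n$. Getting the combinatorial encoding and this normalisation to cooperate is the technical heart of the argument.

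For (i)$\Rightarrow$(ii), I would argue by contraposition and produce, under the assumption that $A$ fails the columns condition over $F$, a colouring of $F^n\setminus\{0\}$ with a bounded number of colours (depending only on $F$ and $m$) and no monochromatic solution, for all $n$. The model case is a single equation $\sum_j a_j x_j=0$: colouring each nonzero vector by the value in $F^{*}$ of its first nonzero coordinate, one checks that in any monochromatic solution the columns $j$ whose leading coordinate sits in the earliest position $i^{*}$ satisfy $\sum_{j\in J}a_j=0$, so that monochromaticity forces a nonempty subset of the coefficients to sum to zero, i.e.\ the single-equation columns condition. For the general system I would iterate this idea along a full flag $F^n=V_0\supset V_1\supset\cdots$, colouring $v$ by the level at which it first leaves the flag together with its leading coefficient in $F^{*}$ at that level. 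The point is that a monochromatic solution would let one read off, level by level, an ordering and a block partition of the columns satisfying (i) and (ii), contradicting the failure of the columns condition. The main difficulty here is to keep the number of colours bounded independently of $n$ --- which is possible because the obstruction has depth at most $m$ --- and to verify rigorously that monochromaticity reconstructs the entire block decomposition.
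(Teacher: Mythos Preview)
The paper does not prove Theorem~\ref{t.ff_1_solution} directly --- it is quoted from \cite{berdeuhind92} --- but, as noted in the final remarks, the proof of Lemma~\ref{l.cycl_1_solution} specialises to finite fields and constitutes the authors' own argument for (ii)$\Rightarrow$(i). Your parametrisation of solutions from the columns condition is exactly Lemma~\ref{lem.find_sol_in_struct}: the paper also sets, for $j$ in the $i$-th block, $x_j=y_i-\sum_{i'>i}\lambda_{i'j}y_{i'}$, and observes that any such tuple lies in the set $F(y_1,\dots,y_t)=\{y_i+\sum_{j>i}a_jy_j\}$. So on the structural side you and the paper agree.

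Where you diverge is precisely at what you call ``the technical heart'': you propose to invoke Hales--Jewett/Graham--Leeb--Rothschild and then normalise away the affine base point using an extra parameter or the $S^1=0$ freedom. The paper's authors explicitly warn (Section~\ref{sec:conclusions}) that ``the simple application of the Vector Space Ramsey Theorem is not enough to reach the desired conclusion'', and your proposed fixes remain at the level of a hope rather than a mechanism. Their resolution is different: rather than looking for a monochromatic combinatorial subspace and then fitting the $x_j$ into it, they prove a tailored Ramsey statement (Lemma~\ref{lem:mon_subg}) guaranteeing a monochromatic \emph{skeleton}, i.e.\ the family of cyclic subgroups generated by elements of $F(x_1,\dots,x_m)$ for an \emph{echelon} generating set. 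The echelon form is what makes the normalisation go through: every element of $F(x_1,\dots,x_m)$ is then automatically the lex-minimal generator of its cyclic subgroup, so a colouring of points induces a colouring of cyclic subgroups without ambiguity, and the monochromatic skeleton pulls back to monochromatic points. Your extra-parameter/$S^1$-shift idea does not obviously produce this, and this is the gap in your outline.

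For (i)$\Rightarrow$(ii) the paper says nothing, so there is nothing to compare; your leading-coefficient colouring is the standard Rado-type construction and is the right idea, though your concern about bounding the number of colours independently of $n$ is the genuine point to check.
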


	Counting versions of the above existence results start with Varnavides~\cite{var59} for  the theorem of van der Waerden, by showing that the number of $3$-term monochromatic arithmetic progressions in finite colorings of the integers is a positive fraction of its total number. Frankl, Graham and R\"odl \cite{frangrahrod88} extended the above result to partition regular linear systems in the integers,  by proving that the set of monochromatic solutions has positive density in the set of all solutions.

	\begin{theorem}[Frankl, Graham, R\"odl \cite{frangrahrod88}]
	Let $r$ be a positive integer. Assume the $k\times m$ integer matrix $A$ satisfies the column condition over $\Q$. Then, there exists a constant $c=c(r,A)>0$ such that in every $r$-colouring of the integer interval $[1,N]$ there are at least $c N^{m-k}$ monochromatic solutions to the linear equation $Ax=0$.
	\end{theorem}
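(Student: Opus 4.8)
The plan is to extract a \emph{finitistic} Ramsey statement from Rado's theorem and then to bootstrap the single guaranteed solution into $\Omega(N^{m-k})$ of them by a supersaturation argument driven by a removal lemma. I will assume throughout that $A$ has full row rank $k$, so that the solution set of $Ax=0$ is $(m-k)$-dimensional and $[1,N]^m$ contains $\Theta(N^{m-k})$ solutions.

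\emph{Step 1: finitistic existence.} Since $A$ satisfies the columns condition over $\Q$, Rado's theorem yields a monochromatic solution of $Ax=0$ in every finite colouring of $\N$. A standard compactness argument (König's lemma, or a limit along an ultrafilter) then produces a threshold $N_0=N_0(r,A)$ such that every $r$-colouring of $[1,N_0]$ already contains a monochromatic solution. Because the system is homogeneous, the map $y\mapsto dy$ sends solutions to solutions, so the same conclusion holds for every homothetic copy $d\cdot[1,N_0]=\{d,2d,\dots,N_0d\}$: the induced $r$-colouring of its $N_0$ points yields a monochromatic solution lying inside $[1,N]$ whenever $N_0d\le N$.

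\emph{Step 2: supersaturation and the contradiction.} Fix an $r$-colouring $\chi$ with classes $C_1,\dots,C_r$ and suppose, for contradiction, that there are at most $\delta N^{m-k}$ monochromatic solutions, with $\delta$ chosen below. A monochromatic solution is exactly a solution with all coordinates in one $C_i$, so each class carries at most $\delta N^{m-k}$ solutions of $Ax=0$. I would now invoke the removal lemma for integer linear systems: for every $\varepsilon>0$ there is $\delta>0$ such that any subset of $[1,N]$ carrying fewer than $\delta N^{m-k}$ solutions of $Ax=0$ can be rendered solution-free by deleting at most $\varepsilon N$ of its points. Applying this to each $C_i$ and removing the exceptional sets deletes at most $r\varepsilon N$ points of $[1,N]$, after which no colour class contains any solution. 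It remains to verify that a homothet survives intact: a point $n$ belongs to $d\cdot[1,N_0]$ iff $d\mid n$ and $n/d\le N_0$, so $n$ lies in at most $N_0$ of the $\lfloor N/N_0\rfloor$ homothets, and deleting $r\varepsilon N$ points destroys at most $r\varepsilon N\cdot N_0$ of them. Choosing $\varepsilon<1/(2rN_0^2)$ leaves at least $\lfloor N/N_0\rfloor-r\varepsilon N N_0>0$ homothets untouched for $N$ large. Such a surviving homothet is a full $N_0$-point copy, still $r$-coloured by $\chi$, hence contains a monochromatic solution by Step 1, contradicting the fact that every class is now solution-free. Therefore the number of monochromatic solutions exceeds $\delta N^{m-k}$ with $\delta=\delta(r,A)>0$.

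\emph{Main obstacle.} The crux, and the step I expect to be hardest, is the removal lemma input. Unlike translation-invariant systems, a partition-regular system such as $x+y=z$ is \emph{not} density regular (the odd numbers are dense yet solution-free), so one cannot simply pass to a dense colour class and count via Cauchy--Schwarz; the naive Varnavides family of dilated copies of $[1,N_0]$ is only one-parameter and yields a mere $\Omega(N)$ solutions. The removal lemma is precisely what supplies the sharp exponent $m-k$ uniformly across all columns-condition systems, and its proof (through hypergraph or arithmetic regularity) is substantial; one must also ensure rank $k$, restrict to the nontrivial/nondegenerate solutions whose complement contributes only $O(N^{m-k-1})$, and keep the bookkeeping between $\varepsilon$, $\delta$, and $N_0$ consistent so that the final $\delta$ depends only on $r$ and $A$.
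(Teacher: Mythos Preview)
The paper does not give its own proof of this statement; it is quoted as a result of Frankl--Graham--R\"odl, and the paper only remarks that their argument ``relies on van der Waerden's theorem on the existence of monochromatic arithmetic progressions.'' Your proposal is correct, but it does not follow the 1988 original; instead it is precisely the removal-lemma template that the present paper develops for its \emph{own} analogous results (the proofs of Theorems~\ref{t.num_sol_ff} and~\ref{t.num_sol_bound_tor}): a finitistic Ramsey threshold, then the removal lemma applied to each colour class, then a double count of copies of a fixed substructure to exhibit one that survives the deletions. Your homothets $d\cdot[1,N_0]$ play exactly the role of the $M$-dimensional subspaces in the proof of Theorem~\ref{t.num_sol_ff}, and your $N_0$-per-point incidence bound is the analogue of the Gaussian-coefficient ratio there.

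The trade-off is the one you identify: the removal-lemma route yields the sharp exponent $m-k$ uniformly and cleanly, at the price of importing hypergraph-regularity machinery that postdates \cite{frangrahrod88} by two decades; the original Frankl--Graham--R\"odl argument is more elementary, averaging over long arithmetic progressions supplied by van der Waerden and doing the counting directly. One point you should make explicit is that the removal lemma you invoke is over $[1,N]$, whereas the results cited in this paper (Lemma~\ref{t.rm_syseq_ff} and Lemma~\ref{l.rm_eqsys_ab}) are stated over finite fields or abelian groups; the standard fix is to embed $[1,N]$ in $\Z_p$ for a prime $p$ large compared to $N$ and the entries of $A$ and coprime to $d_k(A)$, so that solutions in $[1,N]^m$ and in $\Z_p^m$ coincide, and then apply Lemma~\ref{l.rm_eqsys_ab} (using Proposition~\ref{p.dettal_cond_rem} if $d_k(A)>1$).
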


	In the same paper the authors  also characterize the linear systems which are {\it density regular}, namely, the ones which have solutions in any set of integers with positive upper density. They also show that the number of solutions of a density regular system in a set with positive upper density is a positive fraction of the total number of solutions (in this case the constant of proportionality depends on the density of the considered set).

	The result in \cite{frangrahrod88} for partition regular systems relies on van der Waerden's theorem on the existence of monochromatic  arithmetic progressions in any finite coloring of a  sufficiently long integer interval. The result for density regular systems uses Szemer\'edi's theorem \cite{szem75} on  the existence of arbitrarily long arithmetic progressions in sets of integers with positive upper density and the multidimensional Szemer\'edi theorem proved by Furstenberg and Katznelson \cite{furkatz78}. These three results cannot be applied in abelian groups with finite torsion.

	Recall that the exponent of a group $G$ is the least integer $n$ for which $g^n=1$, for all $g\in G$. In this work we show that, under certain hypothesis similar to the columns condition imposed on the  matrix $A$, the number of monochromatic solutions to $Ax=0$ in an abelian group $G$ with exponent $n$  is at least $\delta |G|^{m-k}$ for some $\delta$ depending only on $r, n$ and $m$ and sufficiently large $|G|$. This applies to finite fields (Theorem~\ref{t.num_sol_ff}, where the coefficients of the matrix can be taken in the field.)  More precisely, let  $A$ be a $(k\times m)$ matrix with integer entries. We say that $A$ satisfies the \emph{$n$-columns condition} if it satisfies the $\Z_n$-columns condition when its entries are considered to be in $\Z_n$ (all the operations and coefficients are thought to be in $\Z_n$.) Our main result is the following one:

	\begin{theorem}[Number of solutions for bounded torsion abelian groups] \label{t.num_sol_bound_tor}
	 Let $G$ be a finite abelian group  with exponent $n$. Let $r$ be a positive integer and let $A$ be a $(k\times m)$ matrix with integer entries.

	Assume that $A$ satisfies the $n$--columns condition. There is a constant $c>0$ depending only on $r, n$ and $m$ such that, if $|G|$ is sufficiently large, every $r$--colouring of $G\setminus \{ 0\}$ has at least $ c|G|^{m-k}$ monochromatic solutions of the equation $Ax=0$ in  $G\setminus \{0\}$.
	\end{theorem}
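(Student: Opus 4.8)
The plan is to prove the statement in two stages: first an \emph{existence} (Ramsey-type) result furnishing a single monochromatic solution inside large substructures, and then a \emph{supersaturation} (Varnavides-type) averaging argument upgrading one solution to $c|G|^{m-k}$ of them. For the existence input I would first observe that the $n$--columns condition passes to each prime divisor of $n$: if $p\mid n$, reducing the ordering, the partition, and the witnessing coefficients modulo $p$ shows that $A$ satisfies the columns condition over the field $\F_p$. Since $G$ has fixed exponent $n$ while $|G|$ grows, a pigeonhole over the finitely many primes dividing $n$ forces $G$ to contain a subgroup isomorphic to $\F_p^{d}$ with $d=d(|G|,n)\to\infty$ for some fixed prime $p\mid n$. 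Feeding the $\F_p$--columns condition into Theorem~\ref{t.ff_1_solution} then produces a threshold $d_0=d_0(r,p,m)$, bounded in terms of $r,n,m$, such that every $r$--colouring of $\F_p^{d_0}\setminus\{0\}$ has a monochromatic solution of $Ax=0$; for $|G|$ large we have $d\ge d_0$, so this existence statement is available.

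For the counting stage I would average over homomorphisms. The key algebraic facts are that any group homomorphism $\phi$, applied coordinatewise, sends solutions to solutions, since $A$ has integer entries and $A\phi(x)=\phi(Ax)$, and that it preserves monochromaticity of the pullback colouring $v\mapsto\chi(\phi(v))$ wherever $\phi(v)\ne 0$. Because $G$ has exponent $n$, an element of $\mathrm{Hom}(\Z_n^{d_0},G)$ is specified freely by the images of the $d_0$ generators, so $|\mathrm{Hom}(\Z_n^{d_0},G)|=|G|^{d_0}$. For each $\phi$ whose pullback colouring (completed arbitrarily on $\ker\phi$) returns a monochromatic solution $x$ with $\phi(x)\in(G\setminus\{0\})^m$, existence yields a monochromatic solution $\phi(x)$ of $Ax=0$ in $G$. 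Counting incidences $(\phi,x)$ from below by $|G|^{d_0}$, and bounding from above, for each fixed target $y\in(G\setminus\{0\})^m$, the number of pairs with $\phi(x)=y$ by a quantity of the form $C(r,n,m)\,|G|^{d_0-(m-k)}$, would then give at least $c|G|^{m-k}$ distinct monochromatic solutions with $c=c(r,n,m)$ --- \emph{provided} the solutions returned are genuinely spread across $G$ rather than trapped in a thin subgroup. A direct check of the exponents for Schur's equation over $G=\F_p^{d}$ (where spread is automatic because $G=G[p]$) confirms that the multiplicity bookkeeping and the homomorphism count combine to the claimed order $|G|^{m-k}$.

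The main obstacle, and the place where the argument needs real care, is exactly degeneracy and \emph{spread}. Completing the pullback colouring on $\ker\phi$ is harmless only if the returned solution does not sit inside $\ker\phi$, so one must discard homomorphisms with large kernel (showing they form a negligible fraction) and separately discard the $O(|G|)$ ``trivial'' solutions such as the constant ones. More seriously, the reduction to $\F_p$ produces monochromatic solutions confined to the $p$--torsion subgroup $G[p]$; when $G$ has large homocyclic components of exponent $p^a$ with $a\ge 2$ (the extreme case $G=\Z_{p^a}^s$) one has $|G[p]|=|G|^{1/a}\ll|G|$, so such solutions number at most $|G[p]|^{m-k}$, far below the target. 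To reach all of $G$ the existence input must instead furnish monochromatic solutions built from elements of maximal order. I expect this to require replacing the field Theorem~\ref{t.ff_1_solution} by an existence result over the local ring $\Z_{p^a}$ --- equivalently over the free module $\Z_{p^a}^s$, where the presence of units lets generic homomorphisms carry high-order solutions to genuinely spread images --- derived from the $p^a$--columns condition. Lifting the solution from the residue field $\F_p$ to $\Z_{p^a}$, and then patching the primes together by the Chinese Remainder Theorem without destroying monochromaticity of the global colouring, is the technical heart I would anticipate to be the hard part.
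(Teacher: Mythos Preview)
Your diagnosis of the hard part is exactly right: the existence step cannot go through $\F_p$ alone, because monochromatic solutions trapped in the $p$--torsion $G[p]$ are too few when $G$ has components of exponent $p^a$ with $a\ge 2$. The paper devotes Section~\ref{sec:ramsey} to precisely the lifted Ramsey statement you anticipate (Lemma~\ref{l.cycl_1_solution}): for $M=M(r,n,m)$, every $r$--colouring of $\Z_n^M\setminus\{0\}$ admits a monochromatic solution whose every coordinate has order $n$. So your plan for the existence input, including the CRT patching remark, matches the paper's.

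The genuine gap is in your supersaturation step. The multiplicity bound you assert --- that for each fixed target $y$ the number of pairs $(\phi,x)$ with $\phi(x)=y$ is at most $C|G|^{d_0-(m-k)}$ --- tacitly assumes that the coordinates of the monochromatic solution $x$ returned by the Ramsey step generate a subgroup of $\Z_n^{d_0}$ of rank at least $m-k$. Nothing in the existence result guarantees this. For a matrix satisfying the strong columns condition the constant tuple $x_1=\cdots=x_m$ is already a solution of rank~$1$; more generally the explicit construction (Lemma~\ref{lem.find_sol_in_struct}) produces a solution whose coordinates span a subgroup of rank $t$, the number of blocks in the column partition, which can be far below $m-k$. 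With a rank-$1$ template your homomorphism averaging yields only order $|G|$ distinct images, not $|G|^{m-k}$. Your Schur sanity check over $\F_p^d$ succeeds only because Schur triples with distinct nonzero entries have rank $2=m-k$; it does not test the general obstruction.

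The paper avoids this by replacing Varnavides averaging with the Removal Lemma for linear systems over abelian groups (Lemma~\ref{l.rm_eqsys_ab}), which has the exponent $m-k$ built in: if there are fewer than $\delta|G|^{m-k}$ monochromatic solutions, one may delete at most $\epsilon|G|$ elements and destroy them all. A subgroup-counting estimate (Proposition~\ref{p.num_subg_genrl}) then shows that for suitably small $\epsilon$ some copy of $\Z_{n'}^M$ survives with all of its order-$n'$ elements intact, and Lemma~\ref{l.cycl_1_solution} furnishes a monochromatic solution inside it --- a contradiction. One further technicality absent from your outline: the Removal Lemma requires $\gcd(d_k(A),|G|)=1$, and the paper handles this by first passing (Proposition~\ref{p.dettal_cond_rem}) to an auxiliary matrix $A'$ with $d_k(A')=1$ whose solution set is contained in that of $A$.
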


	The general idea of the proof is as follows. Assume that the number of solutions is asymptotically smaller than a constant fraction of $|G|^{m-k}$. We use a Ramsey result to ensure a monochromatic solution within a popular substructure. We then   use an appropriate version of the Removal Lemma to observe that, if the number of solutions is small, we can remove all the solutions  by deleting few elements. However, if the number of removed elements is small, then one of the popular structures should survive intact   still containing a monochromatic solution and we reach a contradiction.

	The argument is first illustrated for the simpler case of finite fields in Section~\ref{sec.fin_field_all}. The Ramsey result in this case is  \cite[Theorem~2.4]{berdeuhind92} and  the Removal Lemma for linear systems in finite fields is supplied by Shapira~\cite{sha10} and by Kr\'al' and the authors~\cite{kraserven11}. 

	The scheme of the proof of the main result , Theorem
\ref{t.num_sol_bound_tor},  is analogous to the one for the case of finite
fields.
Unfortunately, direct application of the above case does not give enough
solutions for general finite abelian groups with bounded
exponent and we need some preliminary results.

We first prove, in Section \ref{sec:ramsey},  a specific  Ramsey result we
need,  Lemma~\ref{l.cycl_1_solution},
which guarantees  a particular monochromatic structure  containing
solutions of  linear systems. The proof is
an adaptation of the one by  Spencer~\cite{spen79} of the Vector Space 
Ramsey Theorem  by Graham, Leeb and Rothschild~\cite{grahleebrot72}.
In the last section we briefly discuss the reason we include a complete
proof of   Lemma~\ref{l.cycl_1_solution}.

	In Section~\ref{sec:counting} we give an asymptotic counting result for the number of subgroups of a given type in an abelian group $G$ by using a result by Yeh~\cite{yeh48} on the number of  subgroups of a $p$--group. With these two ingredients and the version of the Removal Lemma for linear systems in abelian groups (Lemma~\ref{l.rm_eqsys_ab})  we proceed to the proof of Theorem~\ref{t.num_sol_bound_tor} in Section~\ref{sec.cyclic_groups_all}.  

	 Finally, Section~\ref{s.dens} discusses the density case. We give a characterization, Theorem~\ref{t.num_sol_dens}, of the matrices $A$ with integer entries  which are density regular for every finite abelian group. The paper closes with some final remarks.


	\section{Number of monochromatic solutions inside $\F_q^N$}
	\label{sec.fin_field_all}


	For this section, we let $A$ be a $k\times m$ matrix with coefficients in a finite field $F$ of order $q=p^l$.
	Let $\chi:F^N\to [r]$ be a colouring with $r$ colours. We are interested in
	solutions of the system $Ax=0$, with $x=(x_1,\ldots,x_m)$, $x_i\in
	F^N\setminus \{0\}$ and $\chi(x_1)=\chi(x_2)=\cdots=\chi(x_m)$ (monochromatic solutions.)

	Bergelson, Deuber and Hindman~\cite{berdeuhind92} gave the characterization of    partition regular systems for finite
	fields stated in Theorem~\ref{t.ff_1_solution}.  In this section we prove the following counting version of the result:

	\begin{theorem}[Number of monochromatic solutions in Finite Fields]
	\label{t.num_sol_ff}
	Let $F$ be a finite field with $q=p^l$ elements, let $k$,
	$m$, $N$, $r$ be positive integers, $m\geq k$, and let $A$ be a $k\times m$ matrix with
	coefficients in $F$. Assume that $A$ satisfies the $F$-columns condition. Then,
	for any coloring of the elements of $F^N$ with $r$ colors and sufficiently large $N$, there exists a constant
	$c=c(r,q,m)>0$ such that the system $Ax=0$ with $x=(x_1,\ldots,x_m)$ and
	$x_i\in F^N$ has at least $ c q^{N(m-k)}$ monochromatic solutions.
	\end{theorem}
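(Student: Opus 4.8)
The plan is to combine two ingredients mentioned in the introduction: a finitistic Ramsey result and a removal lemma for linear systems over finite fields. Concretely, I would argue by contradiction. Suppose some $r$-coloring $\chi : \F_q^N \to [r]$ admits fewer than $c\, q^{N(m-k)}$ monochromatic solutions of $Ax=0$, where $c$ is a small constant to be fixed at the end. The goal is to manufacture a monochromatic solution living inside a controlled combinatorial structure that the removal process cannot destroy.

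First I would invoke the Removal Lemma for linear systems over finite fields (due to Shapira and to Kr\'al', Serra and Vena, as cited) in the following form: if the number of solutions of $Ax=0$ with all $x_i$ lying in prescribed color classes is at most $o(q^{N(m-k)})$, then one can delete $o(q^N)$ elements from $\F_q^N$ so as to eliminate \emph{all} such solutions. Summing over the finitely many color patterns, a total deletion of at most $\varepsilon q^N$ elements (for $\varepsilon$ controlled by $c$) removes every monochromatic solution. Let $E \subseteq \F_q^N$ be this exceptional set with $|E| \le \varepsilon q^N$, and work with the restriction of $\chi$ to $\F_q^N \setminus E$.

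The key structural step is to apply the finitistic Ramsey statement, Theorem~\ref{t.ff_1_solution} (equivalently \cite[Theorem~2.4]{berdeuhind92}), which since $A$ satisfies the $F$-columns condition guarantees that for $N$ large every $r$-coloring of $\F_q^{n(r,q,m)}\setminus\{0\}$ contains a monochromatic solution. I would first establish that the exceptional set $E$ is too sparse to meet a positive fraction of the affine (or linear) subspaces of dimension $n(r,q,m)$: a density-$\varepsilon$ set can intersect only an $O(\varepsilon)$-fraction of these subspaces, so for $\varepsilon$ small a positive proportion of such subspaces avoid $E$ entirely. Restricting $\chi$ to any one of these surviving subspaces gives an $r$-coloring of a copy of $\F_q^{n(r,q,m)}$ disjoint from $E$, hence by the Ramsey theorem it contains a monochromatic solution of $Ax=0$ — which contradicts the fact that $E$ was chosen to destroy every monochromatic solution. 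Choosing $c$ (and thereby $\varepsilon$) small enough to make both the removal step and the subspace-covering estimate valid completes the contradiction and yields the stated lower bound $c\, q^{N(m-k)}$.

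The main obstacle I anticipate is calibrating the constants and the quantitative form of the removal lemma so that they interlock: the removal lemma gives a deletion bound $o(q^N)$ that is only meaningful relative to the assumed solution count, so I must make the implication ``few solutions $\Rightarrow$ small $E$'' fully quantitative, with $|E|/q^N$ driven below the threshold needed for the subspace-avoidance counting. A secondary technical point is justifying that a sparse exceptional set misses many subspaces of the relevant fixed dimension; this is a routine double-counting or averaging argument, but one must be careful that the Ramsey dimension $n(r,q,m)$ is a constant independent of $N$ so that the fraction of affected subspaces genuinely scales with the density of $E$.
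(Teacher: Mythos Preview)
Your proposal is correct and follows essentially the same route as the paper: assume few monochromatic solutions, apply the removal lemma once per color class to delete at most $\varepsilon q^N$ elements, then use a double-counting argument with Gaussian binomials to show that a sparse deleted set cannot meet every $M$-dimensional subspace (where $M=n(r,q,m)$ is the Ramsey dimension from Theorem~\ref{t.ff_1_solution}), yielding a surviving subspace in which Theorem~\ref{t.ff_1_solution} forces a monochromatic solution and hence a contradiction. The paper carries out exactly this argument, making the subspace count explicit via the ratio ${N\choose M}_q\big/\big(rm\,q^N{N-1\choose M-1}_q\big)\to 1/(rm(q^M-1))$, which is precisely the ``routine averaging'' you anticipate.
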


	The proof of Theorem~\ref{t.num_sol_ff} is a simple combination of the Ramsey result Theorem~\ref{t.ff_1_solution} and the  Removal Lemma for finite fields, Lemma~\ref{t.rm_syseq_ff} below, proved independently by
	Shapira \cite{sha10} and Kr\'al' and the authors \cite{kraserven11}. It illustrates the proof strategy described in the Introduction when there are no additional difficulties. 

	\begin{lemma}[Removal Lemma for systems of equations \cite{sha10}, \cite{kraserven11}] \label{t.rm_syseq_ff}
	For all positive integers $k$ and $m$, $k\le m$, and every $\epsilon>0$, there exists $\delta=\delta(\epsilon,m)>0$ such that the following holds: Let $F = \mathbb{F}_{q}$ be the finite field of order $q$ and $X_1,\ldots,X_m$ be subsets of $F$, let $A$ be a $(k\times m)$ matrix with coefficients in $F$.

	If there are at most $\delta q^{m-k}$ solutions of the system $Ax=0$, $x=(x_1,\ldots,x_m)$, with $x_i\in X_i$, then there exist sets $X_1',\ldots,X_m'$ with $X'_i\subseteq X_i$ and $|X_i\setminus X_i'| \le \epsilon q$ such that there is no solution of the system $Ax=0$ with $x_i\in X_i'$.
	\end{lemma}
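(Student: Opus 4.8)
The plan is to deduce this linear-algebraic removal statement from the (hyper)graph removal lemma, following the combinatorial strategy for linear systems. First I would normalise the matrix: replacing $A$ by a maximal linearly independent set of its rows does not change the solution set $\ker A$, and since $q^{m-k}\le q^{m-k'}$ whenever $k'\le k$, the hypothesis of having at most $\delta q^{m-k}$ solutions is only strengthened under this reduction. So I may assume $A$ has full row rank $k$, in which case $\ker A$ is a subspace of dimension $d:=m-k$, the total number of solutions in $F^m$ is exactly $q^{d}$, and there are linear forms $\ell_1,\dots,\ell_m$ on $F^{d}$ so that the solutions are precisely the tuples $(\ell_1(t),\dots,\ell_m(t))$ with $t\in F^{d}$; such a solution lies in $X_1\times\cdots\times X_m$ iff $\ell_i(t)\in X_i$ for every $i$.

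Next I would build an auxiliary hypergraph $H$ whose vertex classes are copies of $F$ (indexed by the free parameters $t$ together with a base point) and which carries $m$ families of hyperedges, the $i$-th family encoding the constraint $\ell_i(t)\in X_i$. The construction is arranged---as in the Ruzsa--Szemer\'edi encoding of $x+y=z$ by triangles, extended to several equations---so that the copies of a fixed configuration $F_0$ in $H$ are in bijection, up to a common multiplicity $q^{s}$ arising from a free translation (base-point) symmetry, with the solutions of $Ax=0$ in $X_1\times\cdots\times X_m$. Consequently the hypothesis of at most $\delta q^{m-k}$ solutions translates, after dividing by the common multiplicity, into the statement that $H$ contains at most a $\delta$-fraction of the maximum possible number of copies of $F_0$. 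The (hyper)graph removal lemma then yields a set $D$ of hyperedges, of size $|D|\le g(\delta)\,q^{w}$ for the relevant normalising exponent $w$ and with $g(\delta)\to0$ as $\delta\to0$, whose deletion destroys every copy of $F_0$.

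The crux is to convert this deletion of hyperedges into a deletion of few \emph{elements} from each $X_i$, since the removal lemma returns an arbitrary hitting set $D$ that need not respect the algebraic structure. Here I would use that each hyperedge belongs to one of the $m$ families and, within it, to a single ``diagonal class'' indexed by the value $x=\ell_i(t)\in F$ it records, each class having the same size. Define $X_i'$ by deleting from $X_i$ every value whose diagonal class meets $D$ in more than a $\beta$-fraction of its hyperedges; a routine counting bound then shows that the number of deleted values per part is at most $g(\delta)/\beta$ times $q$, the exponents matching by design, hence at most $\epsilon q$. If some solution survived with all coordinates in the respective $X_i'$, then all $m$ of its diagonal classes would be ``light,'' so strictly fewer than an $m\beta$-fraction of the $q^{s}$ translates of the corresponding configuration could have been destroyed by $D$; provided $m\beta<1$ this leaves an intact copy of $F_0$, contradicting that $D$ met every copy.

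It remains only to fix the constants: take $\beta<1/m$ and then choose $\delta=\delta(\epsilon,m)$ small enough that $g(\delta)/\beta\le\epsilon$, which is possible because $g(\delta)\to0$. I expect the main obstacle to be exactly this structured-removal step---guaranteeing that the combinatorial deletion can be taken along whole diagonal classes---together with setting up a configuration $F_0$ that simultaneously encodes a system of $k$ equations (rather than a single one) while retaining the translation symmetry that makes the averaging work; the analytic input, the hypergraph removal lemma, I would treat as a black box. An alternative route, due to Shapira, bypasses the explicit encoding by applying hypergraph regularity directly to the graph built from the solution set.
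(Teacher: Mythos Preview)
The paper does not prove this lemma at all: it is stated as a black-box input, attributed to Shapira~\cite{sha10} and to Kr\'al' and the authors~\cite{kraserven11}, and is then \emph{applied} in the proof of Theorem~\ref{t.num_sol_ff}. There is therefore no ``paper's own proof'' to compare against; you have sketched a proof of a result that the present paper simply imports.

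That said, your sketch is a reasonable outline of the approach in~\cite{kraserven11}: reduce to full row rank, parametrise the kernel by linear forms, build an auxiliary $m$-partite (hyper)graph whose copies of a fixed configuration correspond to solutions, invoke hypergraph removal, and then average to pass from edge deletion to deletion of elements of the $X_i$. You are also right that the delicate step is converting the unstructured edge set returned by removal into a deletion of few elements per coordinate, and that encoding several equations simultaneously requires some care in the construction of the configuration. If your intent was to supply a proof here, note that what you wrote is a plan rather than a proof: the actual encoding (which hypergraph, which configuration $F_0$, why copies and solutions are in bijection up to the claimed multiplicity, and why the exponents match in the averaging) is precisely the content of the cited papers and would need to be carried out in full. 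For the purposes of the present paper, however, a citation suffices.
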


	\begin{proof}[Proof of Theorem~\ref{t.num_sol_ff}]

	Let $F=\F_q$ be the finite field over $q$ elements. Let $F^N$ be an $N$-dimensional space over $F$. Let $r$ be the number of colours and let $m$ be the number of columns of the matrix $A$. Denote by $Y_i$   the set of elements  coloured $i$, $i=1,\ldots ,r$. Let $M\ge n(r,q,m)$ be such that $F^M$ contains a monochromatic solution according to  Theorem~\ref{t.ff_1_solution}. 

	At this point, we apply $r$ times Lemma~\ref{t.rm_syseq_ff}, one for each
	colour, with $\epsilon$ to be specified later and $X_1=X_2=\cdots=X_m=Y_i$.
	By Lemma~\ref{t.rm_syseq_ff} there is  $\delta=\delta (\epsilon ,m)$ such that,   if the number of monochromatic solutions is at most $\delta q^{N(m-k)}$, then we obtain sets
	  $Y_i'\subset Y_i$ with $|Y_i'|\leq m \epsilon q^N$ such that
	$S=F^N\setminus \bigcup_{i=1}^r Y_i'$ has no monochromatic solution of our linear system.

	The number of $M$--dimensional subspaces in $F^N$ is given by the Gaussian coefficient ${N \choose M}_q $. 
	As we have removed at most $rm \epsilon q^N$ elements and each of them belongs to at most 
	\begin{displaymath}
		{ N \choose M}_q \frac{q^M-1}{q^N-1} = {N-1 \choose M-1}_q,
	\end{displaymath}
	$M$--dimensional subspaces, we have removed at most 
	$$
	rm \epsilon q^N {N-1 \choose M-1}_q
	$$
	such spaces. We observe that 
	$$
	\frac{{N\choose M}_q}{rmq^N{N-1\choose M-1}_q}=\frac{q^N-1}{rmq^N(q^M-1)}\to \frac{1}{rm (q^M-1)} \; \mbox{ as } N\to \infty.
	$$
	Hence we can choose $\epsilon$ independently of $N$, for sufficiently large $N$, such that
	$$
	0<\epsilon<\frac{{N\choose M}_q}{rmq^N{N-1\choose M-1}_q},
	$$ 
	so that there  is at least one $M$--subspace in $S$. By Theorem~\ref{t.ff_1_solution}, $S$ still contains a  monochromatic solution. This contradicts Lemma~\ref{t.rm_syseq_ff}, so that the system has at least $\delta q^{N(m-k)}$ monochromatic solutions, completing the proof of the Theorem. 
	   \end{proof}


	\section{A Ramsey result}\label{sec:ramsey}

	The scheme of the proof of Theorem \ref{t.num_sol_bound_tor} is the same as the one in Section~\ref{sec.fin_field_all} for the case of finite fields. Unfortunately, direct application of  Theorem~\ref{t.num_sol_ff} when $n$ is not a prime does not give us enough solutions, as the number of subgroups isomorphic to $\Z_p^N$ inside $G$, for any prime $p$ dividing $n$, is not large enough. In order to overcome this difficulty, we prove a specific Ramsey result, Lemma~\ref{l.cycl_1_solution} below. This Section is devoted to the proof of this result.

	\begin{lemma}[Solutions outside the finite fields] \label{l.cycl_1_solution}
	Let $n$ be a composite positive integer.
	Let $A$ be a $k\times m$ integer matrix satisfying the $n$-columns condition.
	There exists an $M=M(r,n,m)$ such that,
	for any coloring of the elements of $\mathbb{Z}_n^M\setminus \{0\}$
	with $r$ colors, there exists a monochromatic solution $x=(x_1,\ldots ,x_m)$ to the system $Ax=0$,
	with $x_i\in \mathbb{Z}_n^M \setminus \bigcup_{p | n} \mathbb{Z}_p^M$.
	Moreover, the order of all the $x_i$'s is $n$.
	\end{lemma}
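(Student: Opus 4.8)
The plan is to separate the algebraic content (the $n$-columns condition produces an explicit family of solutions) from the Ramsey content (finding one such family that is monochromatic with all entries of order $n$).

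First I would use the $n$-columns condition to write down a parametric solution. Reorder the columns and choose $0=k_0<k_1<\cdots<k_t=m$ as in the condition, and over $\Z_n$ write $S^i=\sum_{l\le k_{i-1}} c_{il}A^l$ with representatives $c_{il}\in\{0,\ldots,n-1\}$ (setting $c_{il}=0$ for $l>k_{i-1}$). Introducing free parameters $u_1,\ldots,u_t$, one checks directly that
\[ x_j \;=\; u_{i(j)} \;-\; \sum_{i>i(j)} c_{ij}\,u_i, \qquad j=1,\ldots,m, \]
where $i(j)$ is the block index of column $j$, yields $Ax=0$ identically in the $u_i$: the diagonal contribution $\sum_i S^i u_i$ is cancelled by the correction terms, and $S^1=0$ leaves $u_1$ free. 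To force the entries to have order $n$, take $\Z_n^M=V_1\oplus\cdots\oplus V_t$ a decomposition into $t$ equal summands and place $u_i\in V_i$; then the projection of $x_j$ onto $V_{i(j)}$ is exactly $u_{i(j)}$, so $x_j$ has order $n$ (hence lies outside every $\Z_p^M$) as soon as $u_{i(j)}$ does. The point behind the uniformity $M=M(r,n,m)$ is that, everything being computed modulo $n$, one has $|c_{ij}|\le n-1$; thus the whole family lives in the Deuber-type set
\[ D(u_1,\ldots,u_t)=\Big\{\textstyle\sum_{i=h}^{t}\lambda_i u_i : 1\le h\le t,\ \lambda_h=1,\ \lambda_i\in\{-(n-1),\ldots,n-1\}\text{ for } i>h\Big\}, \]
whose description depends only on $n$ and $t\le m$, not on the size of the entries of $A$. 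Since each $x_j$ has first nonzero coefficient equal to $1$, we have $x_j\in D(u_1,\ldots,u_t)$ for every $j$.

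This reduces the lemma to a single Ramsey statement: for $M=M(r,n,m)$ large enough, every $r$-colouring of $\Z_n^M\setminus\{0\}$ admits generators $u_1,\ldots,u_t$, lying in independent direct summands and each of order $n$, such that $D(u_1,\ldots,u_t)$ is monochromatic. Every element of $D$ is nonzero and of order $n$ (its $V_h$-projection is $u_h$), so the colouring is defined on it and monochromaticity of $D$ gives in particular $\chi(x_1)=\cdots=\chi(x_m)$. I would prove this by the inductive amalgamation underlying Spencer's proof~\cite{spen79} of the Graham--Leeb--Rothschild theorem~\cite{grahleebrot72}, transported from vector spaces over a field to the module $\Z_n^M$: peel off the generators one at a time, at each step using a Hales--Jewett-type step-up inside a fresh summand $V_i$ to fix the colour of the next layer of combinations, and a pigeonhole over the already-chosen parameters to preserve the monochromaticity of the part built so far. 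Because the number of admissible coefficient vectors $(\lambda_i)$ is bounded in terms of $n$ and $t$ only, the number of auxiliary colour classes controlled at each step is bounded in terms of $r$ and $n$, so the induction terminates with $M$ depending only on $r,n,m$.

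The main obstacle, and the reason a separate argument is needed rather than a direct appeal to Theorem~\ref{t.num_sol_ff}, is the requirement that all entries have order exactly $n$. Over a field every nonzero vector is generic, whereas in $\Z_n^M$ the proper subgroups $\Z_p^M$ must be avoided throughout the construction, which is what forces the direct-sum/reserved-coordinate device and makes the genericity bookkeeping the delicate step. Concretely, I would impose that a reserved coordinate of each summand $V_i$ be a unit of $\Z_n$, run the Spencer-style induction inside the affine slice cut out by these constraints (all of whose points already have order $n$), and check that the combinatorial lines and subspaces produced by the step-up respect the slice. A second, milder difficulty is that $\Z_n$ has zero divisors, so dimension counts are unavailable and ``subspaces'' must be replaced by parameter sets, with every independence claim phrased through the fixed direct-sum decomposition.
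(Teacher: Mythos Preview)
Your algebraic reduction is correct and coincides with the paper's Lemma~\ref{lem.find_sol_in_struct}: both of you use the $n$-columns condition to build a parametric solution whose entries lie in a set of the form $\{u_h+\sum_{i>h}\lambda_i u_i\}$, which the paper calls $F(u_1,\ldots,u_t)$ and you call $D(u_1,\ldots,u_t)$. Your observation that placing the $u_i$ in independent summands forces every entry to have order $n$ is exactly the role played by the echelon condition in the paper.

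Where you diverge is in the Ramsey step, and here the paper takes a route you do not anticipate. You propose to run a Spencer-style induction directly on the \emph{element} colouring, restricting to an ``affine slice'' where a reserved coordinate is a unit. The paper instead passes to an auxiliary colouring of the $n$-cyclic \emph{subgroups} of $\Z_n^M$: each subgroup gets the colour of its lex-minimal generator. One then proves a Graham--Leeb--Rothschild-type statement (Lemma~\ref{lem:mon_subg}) for this subgroup colouring, obtaining an echelon generating set $\{x_1,\ldots,x_m\}$ whose associated $F$-set is monochromatic at the subgroup level. The punchline is that every element of $F(x_1,\ldots,x_m)$, by virtue of the echelon structure, has leading coordinate equal to $1$ and is therefore the lex-minimal generator of its own cyclic subgroup; hence its element colour agrees with the subgroup colour, and monochromaticity transfers back for free. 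This detour buys two things: the order-$n$ requirement is handled automatically (no reserved-coordinate bookkeeping), and the Spencer induction runs on genuine subgroups and cosets rather than on an affine slice defined by a non-additive constraint (``coordinate is a unit'' is not a coset condition, which is the weak point of your sketch). Your direct approach is plausible and, if the reserved coordinate is pinned to the value $1$ rather than to an arbitrary unit, can likely be pushed through; but as written the compatibility of the affine-slice constraint with the Hales--Jewett step-ups is asserted rather than checked, whereas the paper's minimal-generator trick dissolves the difficulty.
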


		The proof of Lemma~\ref{l.cycl_1_solution}  follows the ideas  in Bergelson, Deuber, Hindman \cite{berdeuhind92}, which come back to Deuber \cite{deu75}, for the proof of Theorem \ref{t.ff_1_solution}. For the case of finite fields, this approach uses the Vector Space Ramsey Theorem of Graham, Leeb and Rothschild~\cite{grahleebrot72}. In our context we also need the following version of this result, Lemma \ref{lem:mon_subg} below. Before stating the Lemma let us fix some terminology.   

	We denote the standard generating set of $\Z_n^m$ by $\{e_1,\ldots ,e_m\}$, where $e_i$ has all entries zero but the $i$--th entry equal to one. Let $H<\Z_n^m$ be a  subgroup isomorphic to $\Z_n^t$. For a given generating set $B=\{x_1,\ldots ,x_t\}$ of $H$, we denote by
	$$
		F(x_1,\ldots,x_t)=\{x_i + \sum_{j=i+1}^t a_{j} x_j \; : \; i\in\{1,\ldots,t\} \textrm{ and each } a_{j}\in \Z_n\}.
	$$
	Let $Y=\{ y_1,\ldots ,y_m\}$ be a generating set of $\Z_n^m$. We say that  $X=\{x_1,\ldots ,x_t\}$  is an \emph{echelon} generating set  with respect to $Y$ if there are integers $1\le k_1\le \cdots \le k_t\le m$ such that
	$$
	x_i=y_{k_i}+ \sum_{k_i < j} \alpha_{i,j} y_j,\; i=1,\ldots ,t, \text{ for some } \alpha_{i,j}\in \Z_n.
	$$
	We note that, since $|Y|=m$ and $|X|=t$, all elements in $Y$ and in $X$ must have order $n$. When $Y=\{ e_1,\ldots ,e_m\}$  we omit the reference to $Y$ and simply say that $X$ is an echelon generating set.

	\begin{lemma}[Graham-Leeb-Rothschild for groups] \label{lem:mon_subg}
		For any positive integers $r, m$ and $n$,  there is  a positive integer $M=M(r,m,n)$ with the following property.  For every $r$--coloring of the $n$--cyclic subgroups $X \cong \mathbb{Z}_n$ of $\mathbb{Z}_n^{M}$,  there is an echelon generating set $\{ x_1,\ldots ,x_m\}$  such that the set of all $n$--cyclic subgroups generated by elements in $F(x_1,\ldots ,x_m)$  is monochromatic.
	\end{lemma}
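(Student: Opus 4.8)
The plan is to prove Lemma~\ref{lem:mon_subg} by adapting Spencer's proof of the Vector Space Ramsey Theorem to the setting of $\Z_n^M$, proceeding by induction on $m$. The base case $m=1$ is essentially the pigeonhole principle: for $M$ large enough, an $r$-coloring of the $n$-cyclic subgroups of $\Z_n^M$ must contain a large monochromatic family from which one can extract a single echelon generator $x_1$ of order $n$. The inductive step is where the real work lies. Assuming the result for $m-1$ with parameter $M(r,m-1,n)$, I would choose $M=M(r,m,n)$ sufficiently large and split the ambient group as $\Z_n^M = \Z_n \oplus \Z_n^{M-1}$, thinking of the first coordinate direction as distinguished.

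The key idea, following Deuber--Bergelson--Hindman, is a \emph{product} or \emph{amalgamation} argument. First I would use the $m=1$ result (with a boosted number of colors) to fix a good choice for the first echelon generator $x_1 = e_1 + \sum_{j>1}\alpha_{1,j} e_j$ and simultaneously control the colors of all $n$-cyclic subgroups of the form generated by $x_1 + \sum_{j} a_j y_j$ as the $y_j$ range over a large subgroup in the complementary directions. Concretely, one colors each element $y$ of the complementary $\Z_n^{M'}$ (for a suitable intermediate dimension $M'$) by the induced coloring of the $n$ subgroups it generates together with $x_1$; this is an $r^n$-type coloring (more precisely a coloring with a bounded number of colors depending on $r$ and $n$). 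Applying the induction hypothesis to this induced coloring on $\Z_n^{M'}$ yields an echelon generating set $\{x_2,\ldots,x_m\}$ such that the family $F(x_2,\ldots,x_m)$ is monochromatic for the induced coloring. Combining $x_1$ with $\{x_2,\ldots,x_m\}$ then gives the desired echelon generating set $\{x_1,\ldots,x_m\}$, and unwinding the definition of the induced coloring shows that every $n$-cyclic subgroup generated by an element of $F(x_1,\ldots,x_m)$ receives the same color.

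The main obstacle is twofold. First, unlike the finite-field case, $\Z_n$ with $n$ composite is not a field, so one must be careful that the elements $x_i$ and all the relevant combinations $x_i + \sum_{j>i} a_j x_j$ genuinely have order $n$ and generate honest copies of $\Z_n$; this is exactly why the echelon form (with leading coefficient $1$ on the pivot $e_{k_i}$) is imposed, and I would verify at each step that the pivot structure forces order $n$. Second, the bookkeeping in the induced coloring must be set up so that the monochromaticity of $F(x_2,\ldots,x_m)$ under the induced coloring translates into genuine monochromaticity of all subgroups generated by $F(x_1,\ldots,x_m)$, including the ``mixed'' elements $x_1 + \sum_{j\ge 2} a_j x_j$. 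The clean way to handle this is to encode, in the induced color of a point $y$, the entire tuple of colors $\big(\chi(\langle x_1 + a\, y\rangle)\big)_{a\in\Z_n}$ rather than a single value, so that fixing the induced color simultaneously fixes the colors of all $n$ scalar shifts; the number of resulting colors is at most $r^{n}$, still bounded in terms of $r$ and $n$, which is all the induction requires.

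Finally, to close the argument I would confirm the parameter dependence is admissible: each inductive step replaces $r$ by at most $r^{n}$ colors and increases the required dimension by a bounded amount determined by the $m=1$ pigeonhole and the previous $M(r^n,m-1,n)$, so the final $M(r,m,n)$ depends only on $r$, $m$ and $n$, as claimed. I expect the interplay between the order-$n$ requirement and the echelon normalization to be the most delicate point, but it is handled uniformly by the pivot convention and requires no new ideas beyond careful adaptation of Spencer's induction.
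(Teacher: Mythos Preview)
Your inductive scheme has a genuine gap at the heart of the ``mixed term'' step, and it is precisely the place where composite $n$ differs from the field case. You propose to encode, in the induced color of $y$ (or of $\langle y\rangle$), the tuple $\big(\chi(\langle x_1+ay\rangle)\big)_{a\in\Z_n}$, and then apply the induction hypothesis to obtain $\{x_2,\ldots,x_m\}$ with $F(x_2,\ldots,x_m)$ monochromatic for this induced coloring. But unwinding this only controls $\chi(\langle x_1+v\rangle)$ when $v$ is a $\Z_n$-scalar multiple of some $z\in F(x_2,\ldots,x_m)$. What you actually need is control of $\chi(\langle x_1+v\rangle)$ for \emph{every} $v\in\langle x_2,\ldots,x_m\rangle$, since $F(x_1,\ldots,x_m)$ contains $x_1+\sum_{j\ge 2}a_jx_j$ for arbitrary $a_j\in\Z_n$. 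When $n$ is composite these two families differ: for instance with $n=6$ the element $v=2x_2+3x_3$ is not of the form $a z$ for any $a\in\Z_6$ and any $z\in F(x_2,x_3)$ (the equation $2a=3$ has no solution in $\Z_6$). So your argument leaves $\chi(\langle x_1+2x_2+3x_3\rangle)$ completely unconstrained. Over a field this obstruction disappears, because every nonzero vector is a unit multiple of an echelon vector; over $\Z_n$ with $n$ composite it does not, and this is exactly why the lemma is not an immediate corollary of the vector-space result.

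There is a second, related issue: the scheme you describe is not Spencer's argument. Spencer's proof (and the paper's adaptation) does not run a bare product-coloring induction on $m$; its engine is the Hales--Jewett theorem, used to produce a \emph{special} $(u{+}1)$-translate in which the color of every transverse $1$-translate is determined by its projection. This is what lets one lift an $(m{-}1)$-skeleton in the projection to an $m$-skeleton upstairs while simultaneously controlling \emph{all} the mixed combinations, not just the scalar ones. The induction is then on an affine statement with an $r$-tuple of parameters $(t_1,\ldots,t_r)$, combined with the extended Hales--Jewett theorem for the base of the lift. Your proposal omits the Hales--Jewett step entirely, and without it there is no mechanism to control the non-unit-leading combinations that arise for composite $n$.
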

	
	The proof of Lemma  \ref{lem:mon_subg} is an adaptation of the one by
Spencer~\cite{spen79} of the Vector Space Ramsey Theorem
which can be found in \cite{grahrotspen90}. We will use the same strategy and
notation. We note that, if $n$ is a prime, then
Lemma \ref{lem:mon_subg} can be derived from the Vector Space Ramsey Theorem.
The validity of the analog of this Theorem for abelian groups of the form
$\Z_n^M$ when $n$ is not a prime
is settled by Voigt \cite{voi80}. The application of this version to our
present needs, however, is not straightforward.
We therefore give a direct proof of the weaker version stated in Lemma 
\ref{lem:mon_subg}
(see Section~\ref{sec:conclusions} for further discussion on this question.)

We postpone the proof of   Lemma~\ref{lem:mon_subg} to Subsection~\ref{ss.prf_glr_groups} and proceed  to show Lemma~\ref{l.cycl_1_solution}.

	\subsection{Proof of Lemma~\ref{l.cycl_1_solution}}

	The first step in the proof is to show that one can always find solutions to the homogenous linear system $Ax=0$ within a set of the form $F(x_1,\ldots ,x_m)$.

	\begin{lemma}\label{lem.find_sol_in_struct}
		Let $A$ be a $k\times m$ integer matrix satisfying the $n$-column condition.
	Let $G\cong \Z_n^M$ and let $x_1,\ldots,x_m$ be $m$ elements in $G$ such that $\langle x_1,\ldots,x_m \rangle \cong \Z_n^m$.

	There are elements $y_1,\ldots,y_m$ in $F(x_1,\ldots,x_m)$ such that $y=(y_1,\ldots ,y_m)$ is a solution of the linear system $Ax=0$. In particular, each $y_i$ has order $n$.
	\end{lemma}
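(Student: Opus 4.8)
The plan is to reproduce, inside the free $\Z_n$--module $\langle x_1,\dots,x_m\rangle\cong\Z_n^m$, the classical solution that Rado's columns condition produces, and then to read off directly from the shape of that solution that each coordinate lands in $F(x_1,\dots,x_m)$ and has order $n$. Since $\langle x_1,\dots,x_m\rangle\cong\Z_n^m$, I may treat $x_1,\dots,x_m$ as a free generating set: every element is \emph{uniquely} a $\Z_n$--combination of the $x_l$, each $x_l$ has order $n$, and an element lies in $F(x_1,\dots,x_m)$ exactly when its lowest--index nonzero coordinate has coefficient $1$.

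First I would fix the data coming from the $n$--columns condition: after reordering the columns I obtain $k_0=0<k_1<\cdots<k_t=m$ and block sums $S^i=\sum_{j=k_{i-1}+1}^{k_i}A^j$ with $S^1=0$ and, for $i\ge 2$, a representation $S^i=\sum_{l=1}^{k_{i-1}}c^i_l A^l$ with coefficients $c^i_l\in\Z_n$. Writing $\beta(j)$ for the index of the block containing column $j$, I use the basis vectors $x_1,\dots,x_t$ as the $t$ free parameters (one per block) and define the candidate solution by
\[
y_j \;=\; x_{\beta(j)} \;-\; \sum_{i>\beta(j)} c^{\,i}_{\,j}\, x_i .
\]
Note that whenever $c^i_j$ appears here we have $\beta(j)<i$, hence $j\le k_{i-1}$, so $c^i_j$ is legitimately one of the coefficients expressing $S^i$ through the earlier columns.

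To check $Ay=0$ I would expand $\sum_j A^j y_j$ and collect the coefficient of each free parameter $x_i$. This coefficient receives $S^i$ from the columns $j$ with $\beta(j)=i$ (whose leading term is $x_i$ with coefficient $1$) and $-\sum_{l\le k_{i-1}}c^i_l A^l=-S^i$ from the correction terms of the earlier columns, while columns $j$ with $\beta(j)>i$ contribute nothing to $x_i$. Thus the coefficient of $x_i$ is $S^i-S^i=0$ for $i\ge 2$ and $S^1=0$ for $i=1$; since the $x_i$ are free this yields $\sum_j A^j y_j=0$, i.e.\ $Ay=0$. Finally, each $y_j$ has the form $x_{\beta(j)}+\sum_{i>\beta(j)}a_i x_i$, so it lies in $F(x_1,\dots,x_m)$, and since its coefficient on $x_{\beta(j)}$ equals $1$, its order in $\Z_n^m$ is exactly $n$. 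If the columns condition was applied after a permutation of the columns, one permutes the $y_j$ back accordingly, which preserves both membership in $F$ and the order. The only care needed is the indexing bookkeeping (keeping the reordering separate from the original labelling); there is no genuine obstacle, the point being simply that Rado's solution carries leading coefficient $1$ in every coordinate, which simultaneously delivers the echelon form demanded by $F$ and the order--$n$ conclusion.
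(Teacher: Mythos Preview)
Your proof is correct and is essentially the same as the paper's: the paper builds the solution by an induction on the blocks, setting $y_j^{i+1}=y_j^i-\lambda_{i+1,j}x_{i+1}$ (and $y_j^{i+1}=x_{i+1}$ for $j$ in the new block), which when unwound gives exactly your closed form $y_j=x_{\beta(j)}-\sum_{i>\beta(j)}c^i_j x_i$. Your direct verification by collecting the coefficient of each $x_i$ is just the inductive step written out all at once, and the conclusion that $y_j\in F(x_1,\dots,x_m)$ (hence has order $n$) is identical.
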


	\begin{proof}
	Since $A$ satisfies the $n$-columns condition, we may assume that the columns of $A$ are ordered in such a way that there are integers $1\leq k_1 < k_2 < \cdots < k_t =m$ such that
	\begin{description}
	\item[(i)] $S^1=\sum_{j=1}^{k_1} A^j=0 \in \Z_n^k$
	\item[(ii)] for $1<i\leq t$, $S^i=\sum_{j=k_{i-1}+1}^{k_i} A^j$ can be expressed as a linear combination of the columns $A^1,\ldots, A^{k_{i-1}}$ with coefficients in $\Z_n$.
	\end{description}

	Let $S^i=\sum_{j=1}^{k_{i-1}}\lambda_{i,j} A^j$, with $\lambda_{i,j}\in \Z_n$ be the linear combination of $S^i$ in terms of $A^j$'s.  Set $F=F(x_1,\ldots ,x_m)$. We construct inductively a solution $y\in F^m$ as follows.

	We set $y_i^1=x_1$ for $i\in [1,k_1]$. It follows from (i) that
	$$(A^1\ldots A^{k_1})(y_1^1,\ldots,y_{k_i}^1)^T=0.
	$$
	Assume that
	$$(A^1,\ldots,A^{k_i})(y_1^i,\ldots,y_{k_i}^i)^T=0,$$
	for some $1\le i<t$  and define
	 $$y_j^{i+1}=\left\{
	               \begin{array}{ll}
	                 y_j^i-\lambda_{i+1}x_{i+1}, & j\in [1,k_i] \\
	                 x_{i+1}, & j\in [k_i+1, k_{i+1}].
	               \end{array}
	             \right.
	$$
	Then,
	\begin{align*}
	(A^1,\ldots,A^{k_{i+1}})(y_1^{i+1},\ldots,y_{k_{i+1}}^{i+1})^T&=(A^1,\ldots,A^{k_{i+1}})(y_1^{i},\ldots,y_{k_{i}}^{i},0,\ldots ,0)^T\\
	&+(A^1,\ldots,A^{k_{i+1}})(-\lambda_{i+1,1},\ldots ,-\lambda_{i+1,k_i},1,\ldots ,1)x_{i+1},
	\end{align*}
	where the first summand is zero by induction and the second one because of the $n$--column property.

	Notice that, when we have finished with the recursion, we obtain an element of
	$F(x_1,\ldots,x_t)^m$, where $t$ is the number of classes of the partition of the columns.
	\end{proof}

	We are now ready to prove Lemma~\ref{l.cycl_1_solution}

	\begin{proof}[Proof of Lemma~\ref{l.cycl_1_solution}]
	Let $M=M(r,m,n)$ be the value for which the conclusion of Lemma~\ref{lem:mon_subg} holds. 
	Let $\chi:\Z_n^{M} \to [r]$ be an $r$--coloring of the elements of $\Z_n^M$. 

	Every element   $x=\sum_{i=1}x_ie_i\in\Z_n^M$ can be uniquely identified as the vector $(x_1,\ldots ,x_n)$ with $0\le x_i\le n-1$.  We define a liner ordering of the elements in $\Z_n^M$ by the lex order of its coordinates:    $(a_1,\ldots,a_M)<(b_1,\ldots,b_M)$ if and only if  $a_i=b_i$ for $i<j$ and $a_j<b_j$, where $j$ is the first entry in which the two vectors disagree.

	We define a colouring $\chi'$ on the $n$--cyclic groups isomorphic of $\Z_n^M$ as follows. For each $n$--cyclic subgroup $T<\Z_n^M$, 
	$$
	\chi'(T)=\chi(\min_{\le} \{y:\; \langle y \rangle =T\}),
	$$
	that is, $\chi'(T)$ is the color of its smallest generating element in the lex order.

	By Lemma~\ref{lem:mon_subg} there is an echelon generating set $\{ x_1,\ldots ,x_m\}$ of a subgroup $H<\Z_n^M$ isomorphic to $\Z_n^m$ such that all $n$--cyclic subgroups generated by elements in $F(x_1,\ldots ,x_m)$ have the same color.

	By Lemma~\ref{lem.find_sol_in_struct}, there are $y_1,\ldots ,y_m\in F(x_1,\ldots ,x_m)$ such that $y=(y_1,\ldots ,y_m)$ is a solution to $Ax=0$.

	The final observation is that, by the definition of $F(x_1,\ldots ,x_m)$ and the fact that $\{ x_1,\ldots ,x_m\}$ is an echelon generating set, every element in $F(x_1,\ldots ,x_m)$ has order $n$ and is the minimum element of the $n$--cyclic subgroup it generates. Indeed, let $z=x_i+\sum_{j=i+1}^m a_{j}x_j$ for some $i$ and some elements $a_{j}\in \Z_n$, be an element in $F(x_1,\ldots ,x_m)$. Since $\{ x_1,\ldots ,x_m\}$ is an echelon generating set, each element can be written as   $x_i=e_{k_i}+\sum_{k_{i}< j}\alpha_{i,j} e_j$, so that its leftmost nonzero coordinate is $1$, while the leftmost coordinates of $x_j, j>i$ have larger subscripts. Thus $z$ has the same order as $x_i$, which is $n$, and its leftmost nonzero coordinate is one, so that it is the smaller element in $\langle z\rangle$. 

	By the above observation, we have $\chi (y_i)=\chi'(\langle y_i\rangle)$ for each $i$ and the solution $y$ is monochromatic. This completes the proof. 
	\end{proof}


	\subsection{Proof of Lemma \ref{lem:mon_subg}} \label{ss.prf_glr_groups}

	The proof of Lemma~\ref{lem:mon_subg} is an adaptation of the one by Spencer~\cite{spen79} of the Vector Space Ramsey Theorem
which can be found in \cite{grahrotspen90}.
 We will use the same strategy and notation. 

	For a group $G\cong \Z_n^{M}$ and a positive integer $i\le n$, we denote by $[G]^i$ the family of cosets of subgroups of $G$ isomorphic to $\Z_n^i$. We call each such coset an $i$--{\it translate}. If $B\subset G$ is a $u$-translate, we also denote by $[B]^i$ the $i$-translates of $G$ in $B$.

	Let  $B\in [G]^{u+1}$ and let $p:B\to \Z_n^u$ be a surjective projection. We note that, for each $1$--translate $T\in [B]^1$, its projection $p(T)$ can be either a $1$--translate in $p(B)$, in which case we call $T$ \emph{transverse}, or a $0$--translate (a point), in which case we call $T$ \emph{vertical}, or none of the two, namely, $p(T)$ is a coset of a proper subgroup of $\Z_n$, in which case $T$ is said to be \emph{degenerate}. 

	We say that $B$ is \emph{special} relative to an $r$--coloring $\chi:[B]^1\to [r]$ and $p$ if for every pair of transverse $1$--translates $T,T'\in [B]^1$, 
	$$
	\mbox{if }\; p(T_1)=p(T_2)\in [p(B)]^1\; \mbox{ then }\; \chi (T_1)=\chi (T_2),
	$$
	that is, the color of a transverse $1$--translate in $B$ is defined by the color of its projection.

	The first step is to show that, for $M$ sufficiently large,  every $r$--coloring of $\Z_n^M$ has a special $(u+1)$--translate for the natural projection. In the proof we use the Hales--Jewett Theorem (see e.g. \cite[Theorem 2.2.3]{grahrotspen90}.) 

	\begin{lemma} \label{lem:tec_spec}
		For every positive integers $u$ and $r$, there exists $w=w(u,r)$ with the following property.
	Fix $p:\Z_n^{u+w} \to \Z_n^{u}$, the projection onto the first $u$ coordinates.
	For each colouring $\chi:[\Z_n^{u+w}]^1 \to [r]$, there is an echelon generating set $\{y_1,\ldots ,y_{u+1}\}$ and $b\in \Z_n^{u+w}$ such that the $(u+1)$--translate $B=b+\langle y_1,\ldots ,y_{u+1}\rangle$ is 
	special with respect to $p$ and $\chi$.
	\end{lemma}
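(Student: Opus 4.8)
Lemma \ref{lem:tec_spec} — proof plan.

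The statement says: for a projection $p:\Z_n^{u+w}\to\Z_n^u$ onto the first $u$ coordinates, any $r$-coloring of the $1$-translates (cosets of $n$-cyclic subgroups) admits, after passing to an echelon-generated $(u+1)$-translate $B$, a configuration where the color of each transverse $1$-translate is determined entirely by its projection into $p(B)$. I need to produce $w=w(u,r)$ and the generators $y_1,\dots,y_{u+1}$.

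Let me think about what I'm really trying to achieve and why Hales–Jewett is the natural tool.

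The plan is to fix the splitting $\Z_n^{u+w}=\Z_n^u\oplus\Z_n^w$ into the \emph{base} (the first $u$ coordinates, which is the image of $p$) and the \emph{fibre} (the last $w$ coordinates, which is $\ker p$), and to look for the special block in the normalized form $B=b+\langle e_1,\dots,e_u,y_{u+1}\rangle$ with $y_{u+1}$ supported in the fibre. Such a generating set is automatically echelon: the pivots of $e_1,\dots,e_u$ are $1,\dots,u$ and the pivot of $y_{u+1}$ is $\min(\operatorname{supp}y_{u+1})>u$. With this normalization $p(B)=\Z_n^u$, and every point of $B$ keeps its base coordinates from the $\langle e_i\rangle$-part while its fibre coordinate lies on the single line $b_{\mathrm{fib}}+\langle f\rangle$, where $f$ is the fibre part of $y_{u+1}$. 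A transverse $1$--translate $T\in[B]^1$ therefore has base part a genuine line $L=\ell+\langle\lambda\rangle$ ($\lambda$ of order $n$) and fibre points $\{b_{\mathrm{fib}}+(e+sd)f:s\in\Z_n\}$, where $e$ is the vertical offset of its base point and $d$ its vertical slope. Thus specialness is equivalent to saying that $\chi(T)$ depends only on $L$, i.e.\ is independent of both $e$ and $d$.

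The engine is Hales--Jewett applied to the fibre, viewed as the cube $[\Z_n]^w$ over the alphabet $\Z_n$ of size $n$, so that the direction of any combinatorial line is an indicator vector $\mathbf 1_I$, an element of order $n$ and the natural candidate for $f$. Let $\mathcal L$ be the finite set of lines in the base $\Z_n^u$, and colour each fibre point $g$ by the tuple $\Phi(g)=\big(\chi(\{(\ell+s\lambda,g):s\in\Z_n\})\big)_{L\in\mathcal L}\in[r]^{\mathcal L}$, recording the $\chi$--colours of the horizontal transverse translates through $g$. This uses $r^{|\mathcal L|}$ colours, a finite number depending only on $u,n,r$; so taking $w=w(u,r)$ to be the corresponding Hales--Jewett number produces a monochromatic combinatorial line, whose direction $\mathbf 1_I$ I set as $f$ and whose base point $\omega^{(0)}$ as $b_{\mathrm{fib}}$, yielding an admissible echelon block $B$.

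Monochromaticity of the line immediately makes the colour of each horizontal ($d=0$) transverse translate independent of its vertical offset, since the lockstep identity $\omega^{(a)}+t\,\mathbf 1_I=\omega^{(a+t)}$ shows that changing the offset merely slides the single fibre point along the monochromatic line. The main obstacle, and the technical heart of the lemma, is the tilted transverse translates (those with $d\neq0$): their colour cannot be read off from a colouring of individual fibre points, because the fibre coordinate $\omega^{(e+sd)}$ moves with the line parameter $s$, so the relevant configuration is only revealed once $f$ has been chosen. Two features make this tractable and must be combined to finish. First, the lockstep property again guarantees that the fibre coordinates of \emph{every} transverse translate of $B$, tilted or not, stay among the $n$ points of the monochromatic line; indeed when $d$ is a unit one may reparametrize a tilted translate as a graph $\{(\psi(a),\omega^{(a)}):a\in\Z_n\}$ over the fibre line, with $\psi$ an affine bijection onto the \emph{same} base line $L$ (using that $d^{-1}\lambda$ generates $\langle\lambda\rangle$). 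Second, to control these diagonal configurations uniformly one enriches the scheme -- colouring ordered configurations along prospective lines, equivalently passing to a monochromatic combinatorial \emph{subspace} obtained by iterating Hales--Jewett, whose two directions then furnish the offset move and the slope move. After this enrichment the same lockstep bookkeeping forces $\chi(T)$ to depend only on the base projection $L$, which is exactly specialness; the degenerate and vertical translates are irrelevant to the definition, and all the order-$n$ conditions are immediate from the indicator form of $f$.
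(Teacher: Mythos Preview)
Your setup is clean, but there is a genuine gap at exactly the point you flag as ``the technical heart of the lemma'': the tilted transverse translates (those with $d\neq 0$). Your colouring $\Phi(g)$ records only the $\chi$-colours of the \emph{horizontal} translates $\{(\ell+s\lambda,g):s\in\Z_n\}$; a monochromatic Hales--Jewett line for $\Phi$ therefore controls nothing except the $d=0$ case. The lockstep identity $\omega^{(a)}+t\mathbf 1_I=\omega^{(a+t)}$ and the reparametrization you describe tell you that the fibre coordinates of a tilted $T$ stay on the line and that $T$ is a graph over that line, but the $\chi$-colour of this graph is a completely new datum, never encoded in $\Phi$. Your proposed fix---``pass to a monochromatic combinatorial subspace \ldots\ whose two directions furnish the offset move and the slope move''---does not work: once you set $y_{u+1}=\mathbf 1_{I_1}$, the block $B=b+\langle e_1,\dots,e_u,\mathbf 1_{I_1}\rangle$ has all its fibre action inside $\langle\mathbf 1_{I_1}\rangle$ alone, so a second direction $\mathbf 1_{I_2}$ simply does not appear in any transverse translate of $B$. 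Nor can you enrich $\Phi$ to record colours of tilted lifts in advance, since ``slope $d$'' only makes sense relative to a fibre direction you have not yet chosen; recording slopes in every coordinate direction would make the number of colours depend on $w$ and break the Hales--Jewett argument.

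The paper's proof resolves this by changing the Hales--Jewett \emph{alphabet} from $\Z_n$ to the set $\mathcal F$ of affine maps $f(x)=c_0+c_1x_1+\cdots+c_ux_u$ from $\Z_n^u$ to $\Z_n$. An element $F\in\mathcal F^w$ is then a full affine section $\overline F:\Z_n^u\to\Z_n^{u+w}$, $\overline F(x)=(x,F(x))$, and one colours $F$ by the tuple $(\chi(\overline F(L)))_{L\in[\Z_n^u]^1}$. A monochromatic combinatorial line $L\subset\mathcal F^w$ yields $B=\bigcup_{F\in L}\overline F(\Z_n^u)$, and the crucial point is that \emph{every} transverse $T\subset B$ lies on some section, $T=\overline F(p(T))$ with $F\in L$, because the moving coordinates of the line supply exactly one free affine function---simultaneously an offset \emph{and} a full linear part---rather than one free scalar. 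Specialness is then immediate from monochromaticity. In short, the key idea you are missing is that the alphabet must be rich enough to encode slopes as well as offsets; a scalar alphabet cannot do this, regardless of how many dimensions of subspace you extract.
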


	\begin{proof}
		Let ${\mathcal F}$ denote the family of functions $f:\Z_n^u\to \Z_n$ of the form
	$$
	f(x_1,\ldots,x_u)=c_0+c_1 x_1+\cdots + c_u x_u \mbox{ with } \; c_0,c_1,\ldots,c_u \in \mathbb{Z}_n.
	$$
	For every $F=(f_1,\ldots,f_w)\in {\mathcal F}^w$, where $w$ is to be specified later, we define the lifting $\overline{F}:\Z_n^u\to \Z_n^{u+w}$ by
	$$
	\overline{F}(x)=(x,F(x)).
	$$
	We observe that the image of a $1$--translate $T\in [\Z_n^u]^1$ by a lifting is a $1$--translate of $\Z_n^{u+w}$:  $\overline{F}(T)\in [\Z_n^{u+w}]^1$.
	Let $v$ be the number  of $1$--translates of $\Z_n^u$. Define a coloring $\chi'$ on ${\mathcal F}^w$ with $r^v$ colors by
	$$
	\chi'(F)=(\chi (\overline{F}(T)):\; T\in [\Z_n^u]^1).
	$$
	Set $w=HJ(|{\mathcal F}|, r^v)$, the constant for which the conclusion of the Hales--Jewett Theorem holds. By the Hales--Jewett Theorem, there is a   combinatorial line $L$ in ${\mathcal F}^w$ which is monochromatic by $\chi'$. By reordering coordinates, we may assume that
	\[ \label{eq:combin_line}
	L=\{(f,\ldots,f,f_{\eta+1},\ldots,f_w):\; f\in {\mathcal F}\},
	\]
	where $f_{\eta+1},\ldots,f_w$ are fixed. We set
	$$
	B=\bigcup_{F\in L} \overline{F}(\Z_n^u).
	$$
	Every element of $B$ is of the form $(x,F(x))$ where $x\in \Z_n^{u}$ and $F\in L$. Every element $F$ in the combinatorial line has, up to reordering of the coordinates,  $\eta$ equal components running over ${\cal F}$ and $w-\eta$ fixed components $f_{\eta+1},\ldots , f_{w}$.  Each $f_j$ is of the form $f_j(x_1,\ldots ,x_u)=c_0^j+c_1^jx_1+\cdots +c_u^jx_u$.
	Therefore,  with the elements
	\begin{align}
	y_i&=e_i+\sum_{j=u+\eta+1}^{u+w} c_i^j e_j , \; i=1,\ldots ,u\nonumber\\	
	y_{u+1}&=e_{u+1}+\cdots +e_{u+\eta}, \label{eq:yu+1}\\
	b&=\sum_{j=u+\eta+1}^{u+w} c_0^j e_j,\nonumber
	\end{align}
	where the $e_i$'s denote the standard generating set of $\Z_n^{u+w}$, we have $B=b+B_0$ with $B_0=\langle y_1,\ldots ,y_{u+1}\rangle$,  and $\{y_1,\ldots ,y_{u+1}\}$ is an echelon generating set.

	Let us show that $B$ is special. Let $T\in [B]^1$ be transverse, say $T=t_0+T_0$ for an $n$--cyclic subgroup $T_0<B_0$. Then $p(T)=p(t_0)+p(T_0)$ and, since $T$ is transverse, $p(T_0)$ is an $n$--cyclic subgroup of $\Z_n^u$. Hence, the first $u+\eta$ coordinates of an element $t\in T$ can be written as
	\begin{align}
		t_i&=x_i+(t_0)_i, \; i=1,\ldots,u \nonumber \\
		t_i&=(t_0)_i+[f(x)-f(0)]_{i-u},\; i=u+1,\dots, u+\eta \nonumber
	\end{align}
	for some $f\in {\mathcal F}$. Therefore, by taking $F=(f,\ldots ,f, f_{\eta+1}, \ldots ,f_{w})\in L$, we can write 
	$$
	T=\overline{F}(p(T)).
	$$
	Hence, if $T,T'\in [B]^1$ are two transverses with $p(T)=p(T')$ and $T=\overline{F}(p(T))$, $T'=\overline{F}'(p(T))$, the fact that $\chi' (F)=\chi' (F')$ (because both belong to the mono--$\chi'$ line $L$) implies, by the definition of $\chi'$, that $\chi (T)=\chi (T')$. Hence $B$ is special with respect to $\chi$ and $p$.
	\end{proof}

	We next prove the affine version of Lemma \ref{lem:mon_subg}. It uses the extended Hales--Jewett theorem for the existence of monochromatic $k$--dimensional combinatorial spaces in a coloring of the combinatorial cube (see e.g. \cite[Theorem~2.3.7]{grahrotspen90}.) In particular, when applied to a coloring of $\Z_n^M$ for sufficiently large $M$, we get a monochromatic $k$--coset of a subgroup isomorphic to $\Z_n^k$ which admits an echelon generating set.

	For brevity, we call the monochromatic structure we are looking for a $(t,n)$--{\it skeleton}. That is, a  $(t,n)$--skeleton is the set of $n$--cyclic subgroups generated by elements in $F(x_1,\ldots ,x_t)$ for some echelon generating set $\{ x_1,\ldots ,x_t\}$.

	\begin{lemma}[Affine version] \label{lem:aff_mon_subg}
	For any positive integers $r, t$ and $n$, there is  a positive integer $M=M(r,t,n)$ such that, for any $r$--coloring of the cosets $y+X$, $y \in \mathbb{Z}_n^{M}$, $X<\Z_n^M$ with $X \cong \mathbb{Z}_n$, there is a point $x_0\in \mathbb{Z}_n^{M}$ and a $(t,n)$-skeleton $S$, such that  $x_0+S$ is monochromatic.
	\end{lemma}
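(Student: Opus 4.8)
The plan is to obtain Lemma~\ref{lem:aff_mon_subg} as a direct consequence of the extended Hales--Jewett theorem, without recourse to the inductive special--set argument. The structural observation driving the proof is that in an affine skeleton $x_0+S$ all of the cosets $x_0+\langle z\rangle$, $z\in F(x_1,\dots,x_t)$, pass through the single point $x_0$; thus the object sought is a distinguished point together with a controlled family of $n$--cyclic cosets emanating from it whose directions run over $F(x_1,\dots,x_t)$. This is exactly the profile of the monochromatic combinatorial subspaces delivered by the extended Hales--Jewett theorem: for a colouring of the points of $\Z_n^N$ with $N=N(r,t,n)$ large enough, one gets a monochromatic combinatorial subspace, that is, a coset $x_0+\langle b_1,\dots,b_K\rangle$ whose generators $b_s=\mathbf 1_{I_s}$ have pairwise disjoint supports and, once the blocks $I_s$ are ordered by least index, form an echelon generating set (each has leftmost nonzero coordinate equal to $1$ and all of order $n$).

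First I would convert the given $r$--colouring $\chi$ of the $1$--translates into an $r$--colouring $\widehat\chi$ of the points of $\Z_n^N$ tailored to this extraction. Since a coset $x_0+\langle z\rangle$ of the skeleton is pinned down by the single point $x_0+z$ as soon as $x_0$ is treated as a fixed companion, the idea is to colour a point $p$ by the $\chi$--colour of the $n$--cyclic coset joining $p$ to the companion base point that is encoded in the constant coordinates of $p$. Feeding $\widehat\chi$ to the extended Hales--Jewett theorem (see \cite[Theorem~2.3.7]{grahrotspen90}) produces a monochromatic combinatorial subspace $W=x_0+\langle b_1,\dots,b_K\rangle$ of some colour $c$; taking $K\ge t$ and discarding degenerate blocks if necessary, I would set the base point to be the fixed part $x_0$ of $W$ and take $x_1=b_1,\dots,x_t=b_t$. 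Every skeleton element $z=b_i+\sum_{j>i}a_jb_j$ lies in $\langle b_1,\dots,b_t\rangle$, so the point $x_0+z$ belongs to $W$, its encoded companion is precisely $x_0$, and hence $\widehat\chi(x_0+z)=\chi(x_0+\langle z\rangle)=c$. As $\{x_1,\dots,x_t\}$ is echelon and every element of $F(x_1,\dots,x_t)$ has order $n$, the resulting set $S$ of $n$--cyclic subgroups is a $(t,n)$--skeleton with $x_0+S$ monochromatic, as required.

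The crux, and the step I expect to resist a one--line treatment, is exactly this reduction. The data we are handed is a colouring of cosets, each an $n$--element set, while the extended Hales--Jewett theorem governs a colouring of single points; since a point lies on many cosets there is no canonical bijection, and the companion base point used to define $\widehat\chi(p)$ must be fixed \emph{before} the subspace, hence its fixed part, is known. Making $\widehat\chi$ well defined and ensuring that the companion read off from an arbitrary point $p$ coincides, for the points $p=x_0+z$ of the eventual subspace, with the genuine fixed part $x_0$ of $W$ is the technical heart of the argument; I would achieve it by reserving a block of coordinates that the combinatorial subspace is forced to leave constant, so that the encoded companion and the fixed part agree throughout $W$. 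The remaining verifications---that the block ordering yields the echelon order demanded of $\{x_1,\dots,x_t\}$ and that all the relevant group elements retain order $n$---are routine.
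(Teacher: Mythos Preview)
Your plan is to bypass the inductive special--set argument by a single application of the extended Hales--Jewett theorem to a point colouring $\widehat\chi$. You have correctly located the crux, but the resolution you sketch does not work, and I do not see how to repair it.

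The extended Hales--Jewett theorem takes a colouring of the \emph{points} of $\Z_n^N$ and returns some combinatorial subspace $W=x_0+\langle b_1,\dots,b_K\rangle$; you have no control over which coordinates become the fixed part. Your definition $\widehat\chi(p)=\chi\bigl(x_0+\langle p-x_0\rangle\bigr)$, with $x_0$ ``encoded in the constant coordinates of $p$'', is circular: which coordinates are constant is a feature of $W$, not of the individual point $p$. The same $p$ lies in many combinatorial subspaces with completely different fixed parts, so no function of $p$ alone can recover the eventual $x_0$. ``Reserving a block of coordinates that the combinatorial subspace is forced to leave constant'' cannot be achieved: the theorem may perfectly well place moving coordinates inside any block you reserve. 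A secondary issue is that even granting a canonical splitting $p=x_0+z$, the set $x_0+\langle z\rangle$ is a $1$--translate only when $z$ has order $n$, so $\widehat\chi$ would be undefined on a positive proportion of points; but this is moot given the first obstruction.

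The paper's inductive proof is not an avoidable complication but precisely the device that bridges coset colourings and point colourings. Lemma~\ref{lem:tec_spec} applies Hales--Jewett to a product colouring with $r^v$ colours (one coordinate for each $1$--translate of the base) to produce a special $(u{+}1)$--translate in which the colour of every transverse coset is determined by its projection; only then can one legitimately pass to a colouring of points in the base, apply the extended Hales--Jewett theorem there, and run the induction that appends the vertical generator. Some mechanism of this sort, producing a region where coset colours factor through a projection, appears to be unavoidable.
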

%

	\begin{proof} For  integers $t_1,\ldots ,t_r$, we denote by $N(t_1,\ldots ,t_r)$ the number such that, for $N\ge N(t_1,\ldots ,t_r)$, every $r$--coloring $\chi:[\Z_n^N]^1\to [r]$ of the $n$--cyclic subgroups of $\Z_n^N$ contains, for some $1\le i\le r$, a $(t_i,n)$--skeleton $S$ and $x_0\in \Z_n^N$ such that $x_0+S$ is monochromatic with color $i$. The Lemma follows by proving the existence of $N(t,\ldots ,t)$. 

	We use induction on the $r$--tuples $(t_1,\ldots ,t_r)$. Clearly, $N(1,\ldots,1)=1$ and $N(1,\ldots,1,0,1,\ldots,1)=0$. Suppose that $N(t_1,\ldots ,t_{i-1}, t_i-1,t_{i+1},\ldots ,t_r)$ exists for each $1\le i\le r$. We set
	\begin{align}
		s&= \max_{1\leq i \leq r}N(t_1,\ldots ,t_{i-1}, t_i-1,t_{i+1},\ldots ,t_r), \notag \\
		u&= EHJ(s,n,r), \notag \\
		w&= w(u,r), \notag \\
		N&= u+w,
	\end{align}
	where $EHJ(s,n,r)$ is the function for the extended Hales-Jewett Theorem and  $w(u,r)$ is the function from Lemma~\ref{lem:tec_spec}.

	Let  $\chi:[\Z_n^N]\to [r]$ be a given coloring of  the $1$-translates of $\Z_n^N$.
	By the definition of $w$ from Lemma~\ref{lem:tec_spec}, there is a
	 $(u+1)$-translate $B=b+B_0,$ and an echelon generating set of $B_0\cong  \Z_n^{u+1}$, that is special under the natural projection $p:B\to \Z_n^u$. We define a
	coloring $\chi'$ of the elements in $\Z_n^u$, by
	$$
	\chi'(x)=\chi(p^{-1}(x)),
	$$
	where $T_x=p^{-1}(x)$ is the unique vertical $1$-translate
	in $B$ that collapses completely onto $x$.

	By the definition of $u$, there exists an $s$-translate $X\subset \Z_n^u$
	monochromatic, say of color $1$, under $\chi'$. Then $p^{-1}(X)\subset B$, is an special $(s+1)$-translate all of whose vertical $1$-translates are colored $1$. We define a coloring $\chi''$ in
	$[X]^1$ by
	\begin{displaymath}
		\chi''(p(T))=\chi(T), \; \textrm{ for each transverse } \; T\in [B]^1.
	\end{displaymath}
	Since $p$ is special, this is a well defined coloring. By the induction hypothesis, as $s\leq N(t_{1}-1,t_2,\ldots,t_r)$, there exists a $(k,n)$--skeleton $S'\subseteq X$ and $x'_0\in \Z_n^N$ so that either
	\begin{itemize}
	\item[{\rm (i)}] $k=t_i$ for some $2\leq i\leq r$ and $x'_0+S'$ has color $i$ under $\chi''$, or
	\item[{\rm (ii)}] $k=t_{1}-1$ and $x'_0+S'$  has color $1$ under $\chi''$.
	\end{itemize}

	In case (i), $x_0'+S'$ is a $(t_i,n)$--skeleton with color $i$ and we are done.

	In case (ii), let  $S'=[F(y_1,\ldots, y_k)]^1$, where $y_1,\ldots ,y_k$ form an echelon generating set of a subgroup of $B_0$ isomorphic to $\Z_n^k$. By using the notation from the proof of Lemma~\ref{lem:tec_spec} on the structure of $B$, we can add $y_{k+1}:=y_{u+1}$ to these elements to form an echelon generating set $\{ y_1,\ldots ,y_k,y_{k+1}\}$  of a subgroup of $B_0$ isomorphic to $\Z_n^{k+1}$. Moreover, all the $n$--cyclic subgroups generated by elements $y_i+\sum_{j>i}a_jy_j\in F(y_1,\ldots ,y_{k+1})$ are, with respect to the projection $p$, either transverse (if $i<k+1$) or vertical (if $i=k+1$.) Hence, for $S= [F(y_1,\ldots ,y_{k+1})]^1$, we obtain the monochromatic $(t_1,n)$--skeleton $x_0+S$ with color $1$. This completes the proof.
	\end{proof}

	Lemma~\ref{lem:mon_subg} follows from Lemma~\ref{lem:aff_mon_subg} using a standard argument: color each $1$-translate of $\Z_n^{N}$ by the color of its corresponding $n$--cyclic subgroup.


	 \section{Counting Subgroups} \label{sec:counting}

	 In this section we give a counting result, Proposition \ref{p.num_subg_genrl} below, for the number of  subgroups isomorphic to  $\Z_n^M$ in an abelian group $G$ which will be used for the proof of the main result Theorem \ref{t.num_sol_bound_tor}, as well as the density result in Section \ref{s.dens}.  

	We shall use the following result by Yeh \cite{yeh48} regarding the number of subgroups of a $p$-group, $p$ a prime. A $p$-group $G$ is of type $(k_1,k_2,\ldots,k_\eta)$, $k_1\le\cdots\le k_{\eta}$,  if $G\cong \prod_{i=1}^\eta \Z_{p^{k_i}}$. 

	\begin{theorem}[Number of subgroups of a $p$-group, \cite{yeh48}]\label{t.num_subgroups_pg} 
		Let $G$ be a prime power abelian group of order $p^{k_1+k_2+\cdots +k_{\eta}}$, type $(k_1,k_2,\ldots,k_{\eta})$, where $k$'s are arranged in ascending order of magnitude. Let
	\begin{align} 
		 h_1&= h_2=\cdots = h_{m_1} > h_{m_1+1} = \cdots = h_{m_1+m_2} > \cdots  \label{e.one} \\ 
		&>  h_{m_1+m_2+\cdots+m_{r-1}+1}=\cdots=h_{m_1+m_2+\cdots + m_{r}}, \notag
	\end{align}
	where $m_1+m_2+\cdots + m_{r}=m\leq {\eta}$, be $m$ positive integers not greater than $k_{\eta}$, and let $\nu_i$ be such that $k_{\nu_i}<h_i\leq k_{\nu_i+1}$ ($i=1,2,\cdots,m$; $k_0=0$). Then the number of subgroups of type (\ref{e.one}) is given by
	\begin{align} \label{e.two}
	\left. p^h \prod_{i=1}^m (p^{{\eta}-\nu_i-i+1}-1) \middle/ \prod_{\mu=1}^r \prod_{\nu=1}^{m_{\mu}} (p^{\nu}-1) \right. \notag
	\end{align}
	where
	\begin{align*}
		h&= \sum_{i=1}^{m}({\eta}-\nu_i +1 - 2i)(h_i -1)  \\
		 &+ \frac{1}{2} (m_1^2+m_2^2 + \cdots + m_r^2-m^2) + \sum_{i=1}^m \sum_{\mu=0}^{\nu_i} k_{\mu}.
	\end{align*}
	\end{theorem}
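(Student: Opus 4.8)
The plan is to count, in two ways, the \emph{marked bases} of subgroups of the prescribed type sitting inside $G$, and then extract the formula as a ratio. Call an ordered tuple $(x_1,\ldots,x_m)$ of elements of $G$ a marked basis of type $(h_1,\ldots,h_m)$ if $x_i$ has order $p^{h_i}$, the orders are arranged so that $h_1\ge\cdots\ge h_m$, and the generated subgroup is the internal direct sum $\langle x_1\rangle\oplus\cdots\oplus\langle x_m\rangle$. Every subgroup $H$ of type (\ref{e.one}) carries exactly $B$ marked bases, where $B$ depends only on $p$ and on $(h_1,\ldots,h_m)$, since any two marked bases of $H$ differ by an automorphism of $H$. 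Hence, if $T$ denotes the total number of marked bases of type $(h_1,\ldots,h_m)$ contained in $G$, the quantity we want is $T/B$, and it remains to evaluate $T$ and $B$.

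First I would compute $T$ by a greedy selection, choosing the generators in order of decreasing order $p^{h_1}\ge p^{h_2}\ge\cdots$. Suppose $x_1,\ldots,x_{i-1}$ have been chosen; I must count the admissible $x_i$, namely those of order exactly $p^{h_i}$ whose cyclic span meets $\langle x_1,\ldots,x_{i-1}\rangle$ trivially. The cleanest way to isolate the nontrivial part of this count is to pass to the socle layer at height $h_i$: the cyclic factors $\Z_{p^{k_j}}$ with $k_j\ge h_i$ are exactly the $\eta-\nu_i$ factors large enough to hold an element of order $p^{h_i}$, and the top coordinates of such an element live in an $\F_p$--vector space of dimension $\eta-\nu_i$. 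The images of $x_1,\ldots,x_{i-1}$ span an $(i-1)$--dimensional subspace there, so the number of admissible top parts is $p^{\eta-\nu_i}-p^{i-1}=p^{\,i-1}\bigl(p^{\eta-\nu_i-i+1}-1\bigr)$, while the remaining lower coordinates contribute an unconstrained $p$--power. Multiplying over $i$ I would obtain $T=\prod_{i=1}^m\bigl(p^{\eta-\nu_i-i+1}-1\bigr)\cdot p^{E(G)}$ for an explicit exponent $E(G)$ coming from the accumulated factors $p^{i-1}$ and the lower-coordinate freedom; the products $\prod_{\mu=1}^{r}\prod_{\nu=1}^{m_\mu}(p^\nu-1)$ do not yet appear, since they are exactly the overcounting to be divided out.

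The number $B$ is then the same computation carried out with $G$ replaced by a single subgroup $H$ of type $(h_1,\ldots,h_m)$: now the ambient factors are $m$ in number with exponents $h_1,\ldots,h_m$, the role of $\eta-\nu_i$ is played by the number of $h_j\ge h_i$, and the resulting linear factors organize, according to the blocks of equal $h_j$ of sizes $m_1,\ldots,m_r$, precisely into $\prod_{\mu=1}^{r}\prod_{\nu=1}^{m_\mu}(p^\nu-1)$ together with a $p$--power $p^{E(H)}$. Taking the ratio, the denominator appears as claimed, and the exponent in the numerator becomes $h=E(G)-E(H)$; verifying that this difference equals the stated $\sum_i(\eta-\nu_i+1-2i)(h_i-1)+\tfrac12(m_1^2+\cdots+m_r^2-m^2)+\sum_i\sum_{\mu=0}^{\nu_i}k_\mu$ is the bookkeeping step.

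The main obstacle is precisely this last step: tracking the $p$--powers exactly. Two points need care. First, the claim that meeting $\langle x_1,\ldots,x_{i-1}\rangle$ trivially is detected entirely at the single socle layer of height $h_i$ must be justified; because the generators are taken in decreasing order, independence at the top layer propagates downward, and this is where one uses that a weakly decreasing tuple generates an internal direct sum iff each new cyclic span meets the previously generated subgroup trivially. Second, the lower-coordinate freedom contributes a $p$--power depending on the full interaction between $h_i$ and the $k_j$ through $\min(h_i,k_j)$, and assembling these, subtracting the $H$--version, and collapsing the double sums into the closed form for $h$ is the delicate accounting; the appearance of the $\sum_{\mu=0}^{\nu_i}k_\mu$ term reflects the contribution of the cyclic factors too small to receive a generator of order $p^{h_i}$. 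An alternative, more structural route is to identify $T/B$ with a sum over all cotypes of Hall polynomials, but the direct greedy count above is more self-contained and yields the exponent $h$ directly.
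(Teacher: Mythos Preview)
The paper does not prove this theorem: it is quoted verbatim from Yeh~\cite{yeh48} and used as a black box in the proof of Proposition~\ref{p.num_subg_genrl}. There is therefore no ``paper's own proof'' to compare your proposal against.

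That said, your outline is the standard orbit--counting argument and is essentially how Yeh's formula is derived. The two points you flag as delicate are indeed the only substantive ones. For the first, the claim that independence is detected at the single layer of height $h_i$ is correct precisely because you choose generators in \emph{weakly decreasing} order of exponent: if $x_i$ has order $p^{h_i}$ and some nontrivial multiple $p^a x_i$ lies in $\langle x_1,\ldots,x_{i-1}\rangle$, then reducing modulo $p^{h_i-1}G$ shows the top coordinate of $x_i$ is already dependent on those of $x_1,\ldots,x_{i-1}$. For the second, the bookkeeping is genuinely tedious but mechanical; the free lower-coordinate count at step $i$ is $p^{\sum_j \min(h_i,k_j)-(\eta-\nu_i)}$ in $G$ and $p^{\sum_j \min(h_i,h_j)-(\#\{j:h_j\ge h_i\})}$ in $H$, and subtracting these together with the accumulated $p^{i-1}$ factors does collapse to the stated $h$ after grouping by the blocks $m_1,\ldots,m_r$. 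Your sketch is correct, but since the paper simply cites the result, there is no alternative approach here to contrast it with.
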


	For our present purposes we denote by $\stiny{G}{M}_n$ the number of subgroups of $G$ isomorphic to $\Z_n^M$.  We next apply Theorem \ref{t.num_subgroups_pg} to prove Proposition~\ref{p.num_subg_genrl} below. We use the following simple Lemma, for which we omit its proof.

	\begin{lemma} \label{p.quo_cycl_max}
		Let $G=\Z_{n_1}\times \ldots \times \Z_{n_{s-1}}\times \Z_{n_s}$ be an abelian group with $n_1\vert \cdots \vert n_{s-1} \vert n_s$. If $H$ is a subgroup of $G$ isomorphic to $\Z_{n_s}$, then $$G/H\cong\Z_{n_1}\times \ldots \times \Z_{n_{s-1}}.$$
	\end{lemma}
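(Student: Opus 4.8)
The plan is to reduce the statement to the case of finite abelian $p$-groups via the primary (Sylow) decomposition and then invoke the classical fact that a cyclic subgroup of maximal order in a $p$-group is a direct summand. First I would record that the orders already match: since $n_1\mid\cdots\mid n_s$, the integer $n_s$ is the exponent of $G$, and $|G/H|=|G|/|H|=(n_1\cdots n_s)/n_s=n_1\cdots n_{s-1}$, which is exactly the order of the claimed quotient. So the only thing to establish is the isomorphism type. To do this I would decompose $G\cong\prod_p G_p$ into its primary components, where $p$ ranges over the primes dividing $|G|$. Writing $n_i=\prod_p p^{e_{i,p}}$, the divisibility chain forces $e_{1,p}\le e_{2,p}\le\cdots\le e_{s,p}$ for each $p$, so $G_p\cong\prod_{i=1}^{s}\Z_{p^{e_{i,p}}}$ and $\exp(G_p)=p^{e_{s,p}}$. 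Since $H\cong\Z_{n_s}$ is cyclic, it splits as $H=\prod_p H_p$ with $H_p$ cyclic of order $p^{v_p(n_s)}=p^{e_{s,p}}=\exp(G_p)$; because primary decomposition is compatible with passing to quotients, $G/H\cong\prod_p (G_p/H_p)$, and it suffices to understand each factor $G_p/H_p$.

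The heart of the argument, and the step I expect to be the main obstacle, is the $p$-group case. Here $H_p$ is a cyclic subgroup of $G_p$ whose order equals the exponent of $G_p$, and the key fact is the classical lemma that such a subgroup is a direct summand: $G_p=H_p\oplus K_p$ for some $K_p<G_p$. The standard proof takes a generator $x$ of $H_p$ of maximal order and a subgroup $K_p$ chosen maximal subject to $K_p\cap\langle x\rangle=\{0\}$; a short argument using $\mathrm{ord}(x)=\exp(G_p)$ then shows $\langle x\rangle+K_p=G_p$, which is where the maximality of the order is essential. Given the splitting, $G_p/H_p\cong K_p$, and since the type (invariant-factor) decomposition of a finite abelian $p$-group is unique and $H_p$ realizes the top factor $p^{e_{s,p}}$, the complement is forced to be $K_p\cong\prod_{i=1}^{s-1}\Z_{p^{e_{i,p}}}$.

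Finally I would reassemble the pieces: $G/H\cong\prod_p\prod_{i=1}^{s-1}\Z_{p^{e_{i,p}}}\cong\prod_{i=1}^{s-1}\prod_p\Z_{p^{e_{i,p}}}\cong\prod_{i=1}^{s-1}\Z_{n_i}$, where the last isomorphism is the Chinese Remainder Theorem applied to the pairwise coprime prime powers $p^{e_{i,p}}$. The primary decomposition and the CRT reassembly are routine bookkeeping; the only nontrivial ingredient is the $p$-group splitting lemma. A streamlined alternative packaging the same content is to prove directly that a maximal-order cyclic subgroup of \emph{any} finite abelian group is a direct summand, write $G=H\oplus K$, and then appeal to uniqueness of the invariant-factor decomposition: appending $n_s=\exp(G)$ to the invariant factors $d_1\mid\cdots\mid d_t$ of $K$ yields a valid chain $d_1\mid\cdots\mid d_t\mid n_s$ that must coincide with $n_1\mid\cdots\mid n_s$, forcing $t=s-1$ and $K\cong\Z_{n_1}\times\cdots\times\Z_{n_{s-1}}$.
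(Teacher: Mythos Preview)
Your proof is correct. The paper actually omits the proof of this lemma entirely, calling it ``simple,'' so there is no argument to compare against; your approach via primary decomposition together with the classical fact that a cyclic subgroup of maximal order in a finite abelian $p$-group is a direct summand (and the uniqueness of invariant factors) is a standard and fully valid way to establish the result.
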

	Let us notice that the statement of Lemma \ref{p.quo_cycl_max} is not true if $H$ is isomorphic to a smaller cyclic group.

	\begin{proposition}\label{p.num_subg_genrl}
		Let $G$ be an abelian group of exponent $n$ and let $M>1$ be an integer. If $\Z_{n}^M$ is a subgroup of $G$ then, 
	$$
			\sti{G}{M}_n \geq c_1|G|\sti{G/\Z_n}{M-1}_n,
	$$
	for some constant $c_1$ which depends only on $M$ and $n$.		
	\end{proposition}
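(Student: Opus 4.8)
The plan is to estimate $\sti{G}{M}_n$ by a double counting of incidences between the subgroups we wish to count and their cyclic subgroups of order $n$. Concretely, I would count pairs $(H,K)$ where $K\le G$ is a subgroup isomorphic to $\Z_n^M$ and $H\le K$ is a cyclic subgroup of order $n$. Counting first over $K$, each copy of $\Z_n^M$ contains exactly $N=N(n,M)$ cyclic subgroups of order $n$, a quantity depending only on $n$ and $M$ (namely the number of order-$n$ elements of $\Z_n^M$ divided by $\phi(n)$), so the number of such pairs equals $N\cdot\sti{G}{M}_n$.

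The second, more delicate count fixes $H$ first and combines the correspondence theorem with a splitting argument. For a fixed cyclic $H\cong\Z_n$, I claim that the subgroups $K$ with $H\le K\le G$ and $K\cong\Z_n^M$ are in bijection with the subgroups of $G/H$ isomorphic to $\Z_n^{M-1}$. The forward direction is clear: if $K\cong\Z_n^M$ and $H\le K$, then $H$ is a cyclic subgroup of maximal order in $K$, hence a direct summand, so $K/H\cong\Z_n^{M-1}$. For the converse I would invoke the standard fact that in a finite abelian group a cyclic subgroup whose order equals the exponent is a direct summand: since $K\le G$ has exponent dividing $n$ and contains $H\cong\Z_n$, its exponent is exactly $n$, so $H$ splits off and $K\cong H\oplus(K/H)\cong\Z_n\oplus\Z_n^{M-1}=\Z_n^M$ whenever $K/H\cong\Z_n^{M-1}$. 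The correspondence theorem then shows that the number of such $K$ equals $\sti{G/H}{M-1}_n$. Here Lemma~\ref{p.quo_cycl_max} is essential: as the exponent $n$ is the largest invariant factor of $G$, it guarantees $G/H\cong G/\Z_n$ for every cyclic $H$ of order $n$, so this count is the same for all $H$ and equals $\sti{G/\Z_n}{M-1}_n$.

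Combining the two counts gives
\[
N\cdot \sti{G}{M}_n \;=\; \bigl|\{H\le G:\ H\cong\Z_n\}\bigr|\cdot \sti{G/\Z_n}{M-1}_n,
\]
so it remains to bound the number of cyclic subgroups of order $n$ in $G$ from below by a constant multiple of $|G|$. Writing $G\cong\Z_{n_1}\times\cdots\times\Z_{n_s}$ with $n_1\mid\cdots\mid n_s=n$, any element whose last coordinate has order $n_s=n$ already has order $n$; there are $\phi(n)\,|G|/n$ such elements, and each order-$n$ cyclic subgroup contains at most $\phi(n)$ of them, so $G$ has at least $|G|/n$ cyclic subgroups of order $n$. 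Substituting this and setting $c_1=1/(n\,N(n,M))$ yields the claimed inequality.

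The step I expect to be the main obstacle is the converse direction of the bijection in the second count, i.e.\ verifying that a subgroup $K$ with $H\le K$ and $K/H\cong\Z_n^{M-1}$ is genuinely isomorphic to $\Z_n^M$ and not to some other group of exponent $n$ and order $n^M$. This is precisely where the ``maximal-order cyclic subgroup is a direct summand'' theorem is needed, and it is what makes the correspondence an honest bijection; the remaining steps are routine counting.
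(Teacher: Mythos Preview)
Your argument is correct and is genuinely different from the paper's. The paper does not double count; instead it invokes Yeh's explicit formula (Theorem~\ref{t.num_subgroups_pg}) prime by prime to prove the two absolute estimates $\sti{G}{M}_n\ge c_1|G|^M$ and $\sti{G/\Z_n}{M-1}_n\le |G|^{M-1}$, and then divides. Your route replaces all of that by the exact identity
\[
N(n,M)\cdot\sti{G}{M}_n=\bigl|\{H\le G:\ H\cong\Z_n\}\bigr|\cdot\sti{G/\Z_n}{M-1}_n,
\]
obtained by counting incidences $(H,K)$, together with the elementary bound $|\{H\}|\ge |G|/n$. The bijection step is sound: the ``cyclic subgroup of exponent order is a direct summand'' theorem guarantees that $H\le K$ with $K/H\cong\Z_n^{M-1}$ forces $K\cong\Z_n^M$, and Lemma~\ref{p.quo_cycl_max} makes the count independent of the chosen $H$. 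Your approach is more elementary and conceptually cleaner, and it sidesteps Yeh's formula entirely; the paper's approach, on the other hand, yields the stronger absolute bounds $\sti{G}{M}_n\ge c_1|G|^M$ and $\sti{G/\Z_n}{M-1}_n\le |G|^{M-1}$ as byproducts, though only their ratio is actually used later in the proof of Theorem~\ref{t.num_sol_bound_tor}.
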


	\begin{proof} Let $G=\prod_{p|n}G_p$ be the decomposition of $G$ into its $p$--components. The number of subgroups of $G$ isomorphic to $\Z_n^M$ is the product of the number of subgroups of $G_p$ isomorphic to $\Z_{p^{\beta_p}}^M$ for each prime $p$ dividing $n$, where $\beta_p$ is the largest power of $p$ dividing $n$:
	\begin{equation}\label{eq:pcomp}
	\sti{G}{M}_n=\prod_{p|n}\sti{G_p}{M}_{p^{\beta_p}}.
	\end{equation}

	Let $G_p$ be of type $(\alpha_1,\ldots , \alpha_l)$, where $\alpha_l=\beta_p$. Let $M'\ge M$ denote the number of copies of $\Z_{p^{\alpha_l}}$ in $G_p$, so that $\Z_{p^{\alpha_l}}^{M'}\subset G_p$ but $\Z_{p^{\alpha_l}}^{M'+1}\not\subset G_p$. 
	We can apply  Theorem~\ref{t.num_subgroups_pg} with $\eta=l$, $m=m_1=M$ and $h_1=\cdots=h_{m_1}=\alpha_l$. We then have 
	$\nu_i= l-M'$ for each $i=1,\ldots ,M$, and
	\begin{eqnarray}
	\sti{G_p}{M}_{p^{\alpha_l}}&=&p^h\prod_{i=1}^M\frac{p^{l-(l-M')-i+1}}{(p^{i}-1)}\nonumber  \\
	&\ge& p^{h+\sum_{i=1}^M(M'-2i+1)}.\label{eq:nsubg}
	\end{eqnarray}
	The value of $h$ given by Theorem~\ref{t.num_subgroups_pg} is, in our case,
	\begin{align*}
		h&= \sum_{i=1}^M (l-(l-M')+1-2i)(\alpha_l-1)  + \sum_{i=1}^M \sum_{\mu=0}^{\nu_i} k_{\mu}  \\
	&= \sum_{i=1}^M (M'-2i+1)(\alpha_l-1) + M\sum_{i=1}^{l-M'} \alpha_i.
	\end{align*}
	Hence the exponent of $p$ in \eqref{eq:nsubg} is
	$$
	\alpha_l\sum_{i=1}^M(M'-2i+1)+M\sum_{i=1}^{l-M'} \alpha_i=M\sum_{i=1}^l\alpha_i-M^2\alpha_l,
	$$
	which gives 
	$$
	\sti{G_p}{M}_{p^{\alpha_l}}\ge p^{-M^2\alpha_l}|G_p|^M.
	$$
	Therefore, in view of \eqref{eq:pcomp}, we have
	\begin{equation}\label{eq:lbound}
	\sti{G}{M}_{n}\ge c_1|G|^M,
	\end{equation}
	where $c_1=\prod_{p|n} p^{-M^2 \beta_p}$ depends only on $M$ and $n$.

	Let us compute an upper bound for the number of subgroups isomorphic to $\Z_{p^{\alpha_l}}^{M-1}$ in $G_p/\Z_{p^{\alpha_l}}$. We use Lemma~\ref{p.quo_cycl_max} to see that if $G_p$ is of  type
	\begin{displaymath}
		(\alpha_1,\alpha_2,\ldots,\alpha_{l-1},\alpha_l),
	\end{displaymath}
	 then $G_p/\Z_{p^{\alpha_l}}$ is of  type
	\begin{displaymath}
	(\alpha_1,\alpha_2,\ldots,\alpha_{l-1}).
	\end{displaymath}
	By using again Theorem~\ref{t.num_subgroups_pg} for the $p$--component $G_p$ of $G$, we have
	\begin{equation}\label{eq:ubound}
	\sti{G_p/\Z_{p^{\alpha_l}}}{M-1}_{p^{\alpha_l}}
		=p^h\prod_{i=1}^{M-1} \frac{(p^{(l-1)-(l-M')-i+1}-1)}{(p^{i}-1)} \le p^{h+(M-1)(M'-1)},
	\end{equation}
	where
	\begin{align*}
		h&=\sum_{i=1}^{M-1} ((l-1)-(l-M')+1-2i)(\alpha_l-1) + \sum_{i=1}^{M-1} \sum_{\mu=0}^{\nu_i} k_{\mu}\\
	&\leq(M-1)(M'-1)(\alpha_l-1) + (M-1)\sum_{i=1}^{l-M'} \alpha_i\\
	&= (M-1)(\sum_{i=1}^{l-1} \alpha_i-(M'-1)).
	\end{align*}
	By substituting this value of $h$ in \eqref{eq:ubound} we get
	$$
	\sti{G_p/\Z_{p^{\alpha_l}}}{M-1}_{p^{\alpha_l}}\le p^{(M-1)\sum_{i=1}^{l-1}\alpha_i}=|G_p|^{M-1}.
	$$
	By applying \eqref{eq:pcomp} to $G/\Z_{p^{\alpha_l}}$,
	\begin{equation}\label{eq:ubound1}
	\sti{G/\Z_{b}}{M-1}_n=\prod_{p|n}\sti{G_p/\Z_{p^{\beta_p}}}{M-1}_{p^{\beta_p}}\le |G|^{M-1}.
	\end{equation}
	By combining \eqref{eq:lbound} and \eqref{eq:ubound1} we get the result.	
	\end{proof}


	\section{Proof of Theorem \ref{t.num_sol_bound_tor}} \label{sec.cyclic_groups_all}

	We first introduce  the Removal Lemma for abelian groups. We recall that the
	$k$--determinantal of an integer matrix $A$ is the greatest common
	divisor of all the determinants of square submatrices of
	order $k$ of $A$.

	\begin{lemma}[Removal Lemma for Abelian Groups \cite{kraserven11-2}]\label{l.rm_eqsys_ab}
	Given an integer $(k\times m)$ matrix $A$ and  $\epsilon>0$ there is
	a $\delta=\delta (\epsilon, A)>0$ such that the following holds.

	For every Abelian group $G$ of order $n$ coprime with $d_k(A)$ and
	every family of subsets $X_1,\ldots ,X_m$ of $G$, if the homogeneous
	linear system $Ax=0$ has at most $\delta n^{m-k}$ solutions with
	$x_1\in X_1,\ldots ,x_m\in X_m$ then there are sets $X'_1\subset
	X_1,\ldots ,X'_m\subset X_m$, with $\max_i |X'_i|\le \epsilon n$, such
	that there are no solutions to the system with $x_1\in X_1\setminus
	X'_1,\ldots, x_m\in X_m\setminus X'_m$.
	\end{lemma}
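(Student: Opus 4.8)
The plan is to prove Lemma~\ref{l.rm_eqsys_ab} by reducing it to the hypergraph removal lemma, following the strategy of \cite{kraserven11-2}; the hypothesis $\gcd(n,d_k(A))=1$ is exactly the non-degeneracy one needs over every abelian group of order $n$, and I would exploit it first through the Smith normal form of $A$. Writing $A=U\Sigma V$ with $U\in GL_k(\Z)$, $V\in GL_m(\Z)$ unimodular and $\Sigma$ in Smith normal form, the product of the first $k$ invariant factors of $\Sigma$ equals $d_k(A)$; since this is coprime to $n$, each invariant factor acts invertibly on $G$, while $U,V$ induce automorphisms of $G^k$ and $G^m$. Hence $x\mapsto Ax$ is a surjective homomorphism $G^m\to G^k$ with kernel of size exactly $n^{m-k}$, and after the invertible substitution $x=V^{-1}x'$ the solutions are parametrized by the $m-k$ free coordinates $x'_{k+1},\dots,x'_m\in G$. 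Each original coordinate becomes an explicit integer linear form $x_i=\ell_i(x'_{k+1},\dots,x'_m)$, so a solution with $x_i\in X_i$ is precisely a point $y\in G^{m-k}$ satisfying the $m$ membership conditions $\ell_i(y)\in X_i$. This is what makes $\delta n^{m-k}$ the correct threshold.

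Next I would set up a Cayley-type auxiliary structure $H$ on $m$ vertex classes $V_1,\dots,V_m$, each a copy of $G$, encoding the constraints $\ell_i(y)\in X_i$ by declaring the relevant tuples of vertices to be (hyper)edges, so that the copies of a fixed complete graph/hypergraph $K$ on the classes biject with the solutions up to the diagonal action $V_i\ni v\mapsto v+t$ of $G$. For a single equation ($k=1$) this is the classical reduction to $K_m$ (a triangle when $m=3$) removal in a graph: between consecutive classes one puts, for each element of $X_i$, the parallel class of pairs whose difference realizes it, so that copies of $K$ correspond to solutions, the whole graph is invariant under the diagonal shift, and each element of $X_i$ is a single $G$-orbit of edges of size $n$. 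One checks that the number of copies of $K$ equals $|G|^{k}$ times the number of solutions, so ``at most $\delta n^{m-k}$ solutions'' becomes ``at most $\delta''|V(H)|^{|V(K)|}$ copies'' for a constant $\delta''$. For general $k$ one reduces in the same manner to the $(k+1)$-uniform hypergraph removal lemma.

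Given $\epsilon$, I would pick $\epsilon'$ a suitable constant multiple of $\epsilon$, let the hypergraph removal lemma supply the associated $\delta''(\epsilon')$, and set $\delta$ so that the copy count stays below this threshold; the removal lemma then yields a set $D$ of at most $\epsilon'|V(H)|^{k+1}$ hyperedges whose deletion makes $H$ copy-free. The heart of the argument is to convert this \emph{edge} deletion into the deletion of few \emph{elements} of the $X_i$, and here the diagonal $G$-invariance is indispensable, since a priori the two operations live at incomparable scales ($n^{k+1}$ versus $n$). The point is that $D$ must destroy every $G$-orbit of copies, and such an orbit is a family of pairwise edge-disjoint copies of $K$ that together saturate exactly the color classes (the elements of the $X_i$) attached to the corresponding solution; consequently $D$ must spend a number of hyperedges proportional to the orbit size on each solution. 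This forces, for every solution, at least one attached color class to be \emph{heavy}, i.e.\ to meet $D$ in a constant fraction of its hyperedges; a counting bound then shows the number of heavy color classes is $O(|D|/n^{k})=O(\epsilon n)$, and deleting the corresponding elements yields $X_i'\subseteq X_i$ with $|X_i'|\le\epsilon n$ destroying all solutions. In the single-equation case this is completely concrete: each solution orbit is $n$ edge-disjoint triangles tiling its three matchings, so $|D|\ge n$ per solution and the heavy matchings number at most $3|D|/n\le\epsilon n$.

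I expect the main obstacle to be precisely this rounding step for general $k$. One must construct the hypergraph so that a color class (one element of some $X_i$) has size comparable to $|V(H)|^{k+1}/n$, i.e.\ to the hyperedge scale divided by $n$, so that the heavy-class count comes out as $O(\epsilon n)$ rather than a useless $O(n^{k})$, and one must verify the orbit-design property (edge-disjointness and saturation) that drives the heavy-class dichotomy. The naive bound on the number of orbits merely ``touched'' by $D$ is worthless; only the rigid Cayley structure rescues the count. Since the hypergraph removal lemma enters as a black box, the resulting $\delta$ is ineffective, in agreement with the statement.
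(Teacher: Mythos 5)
You should first note a structural point: the paper does not prove Lemma~\ref{l.rm_eqsys_ab} at all --- it is quoted verbatim from \cite{kraserven11-2} and used as a black box, so the fair comparison is with that reference, whose overall strategy (Smith normal form to exploit the coprimality of $d_k(A)$ and $n$, then a Cayley-type multipartite construction feeding into hypergraph removal, then conversion of edge deletion into element deletion via heavy colour classes) you have correctly reconstructed. Your preprocessing is sound: since $d_1\cdots d_k=d_k(A)$ is coprime to $n$ and each invariant factor divides this product, multiplication by each $d_i$ is an automorphism of $G$, so $x\mapsto Ax$ is surjective with kernel of size exactly $n^{m-k}$ and the parametrization by $m-k$ free coordinates is legitimate. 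The case $k=1$ is also complete as you sketch it: each solution yields $n$ pairwise edge-disjoint copies of the pattern, each deleted edge kills at most one of them, the pigeonhole over the $m$ matchings makes some coordinate heavy, and heavy elements number $O(|D|/n)=O(\epsilon n)$. This is the classical argument and it works.

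The gap is exactly where you flag it, and it is not a ``rounding step'' but the mathematical core of \cite{kraserven11-2}: for $k\ge 2$ you never construct the hypergraph $H$, and the properties you require of it are mutually constraining in a way your sketch underestimates. With vertex classes of size $n$ (as your copy count ``$|G|^k$ times the number of solutions'' presupposes), each solution must carry $n^k$ pairwise edge-disjoint copies of $K$ saturating colour classes of size $n^k$; the diagonal action $v\mapsto v+t$ of $G$ that you invoke produces an orbit of size only $n$, so for $k\ge 2$ the acting group must be essentially $G^k$ with compatibility conditions across the classes, and invariance under the diagonal shift alone does not account for the copies. Worse, the per-solution edge-disjointness --- the property that forces $D$ to spend one hyperedge per copy and drives the heavy-class count --- does not hold for naive encodings of $k$ simultaneous constraints into a $(k+1)$-uniform structure: one needs that $k+1$ of the $m$ ``partial-sum'' coordinates determine at most one copy within an orbit, i.e.\ that suitable coordinate projections are bijections, and it is precisely here that the coprimality hypothesis must be invoked a second time, through the invertibility of appropriate minors of $A$ on $G$. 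Exhibiting such an $H$ and verifying (a) copies correspond to solutions with controlled multiplicity, (b) each element of $X_i$ spawns a colour class of the correct size, and (c) the edge-disjoint saturating families exist for every solution, is most of the content of the cited paper; without it the passage from deleting $O(\epsilon' |V(H)|^{k+1})$ hyperedges to deleting $O(\epsilon n)$ group elements remains unproven, and your proposal is a roadmap rather than a proof.
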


	The next Proposition allows us to circumvent  the condition regarding the coprimality of
	$d_k(A)$ and $n$.

	\begin{proposition} \label{p.dettal_cond_rem}
		Let $A$ be a $k\times m$ integer matrix satisfying the $n$--columns condition.  Assume that $d_k(A)>1$.  There is   a $k\times m$  matrix $A'$ with integer coefficients which satisfies the $n$--columns condition such that $d_k(A')=1$  with the following property. For every abelian group $G$ with exponent $n$, the set of solutions of $A'x=0$ with  $x\in G$ is a subset of the set of solutions of the equation $Ax=0$ with  $x\in G$. 
	\end{proposition}

	\begin{proof} We proceed in two steps. First we note that a matrix $A$ satisfying the $n$--columns conditions is equivalent (in $\Z_n$) to a matrix $A''$ which satisfies the $\Z$--columns condition. Indeed, it suffices to replace the columns $A^i$ of $A$ by $A^i+nw_i$ for  appropriate integer vectors $w_i\in \Z^k$ so that the equations defining the $n$--columns condition are satisfied with the coefficients in $\Z$.  By doing so the set of solutions of $A''x=0$ in $G$ is the same as the one of the original linear system $Ax=0$, since $n$ is the exponent of $G$. 

	We thus may assume that $A$ satisfies the $\Z$--columns condition. 
	Let us consider the Smith Normal Form of $A$: there exist two matrices $U$ and $V$
	such that
	\begin{displaymath}
		U A V= (D|0)
	\end{displaymath}
	where $0$ is a $k\times (m-k)$ all-zero matrix,  $D$ is a $k\times k$ diagonal integer matrix with 
	$d_1,\ldots,d_k$ in the main diagonal with $\prod_{i=1}^k d_i=d_k(A)$. 
	Moreover, $U$ and $V$ are square unimodular integer matrices: 
	$U$ represents the row operations and $V$ the column operations that transform $A$ to $(D\vert 0)$. 

	Let us consider the matrix $B=(D|0) V^{-1}=U A$. As $B$ is built from $A$ by integer row operations, $B$ satisfies the $\Z$--columns condition and has the same set of solutions as $A$ in $G$. Since $V^{-1}$ is unimodular, it represents integer linear combinations of columns of $(D|0)$ and we can observe that each coefficient in the $i$-th row of $B$ is a multiple of $d_i$, the $i$-th element of $D$. 

	Consider now the matrix $A'$ obtained from $B$ by dividing the $i$-th row by $d_i$, for all $i\in[1,k]$. Since   $A' V=(\rm{Id}|0)$ is the Smith Normal Form of $A'$, we have  $d_k(B)=\det(\rm{Id})=1$. 

	Let $x=(x_1\ldots,x_m)\in G^m$ be a solution to $A'x=0$. By multiplying by $d_i$ the $i$--th linear equation we get
	\begin{displaymath}
	0=d_i(a'_{i,1}x_1 + \cdots + a'_{i,m}x_m)=b_{i,1}x_1 + \cdots + b_{i,m}x_m.
	\end{displaymath}
	Hence, $x$ is a solution of $Bx=0$ and therefore  it is also a solution to the homogeneous system defined by $A$.

	Finally let us show that $A'$ satisfies any linear equation satisfied by the columns of $B$ with the same coefficients (in particular, it satisfies the $\Z$--columns condition as $B$ does). Suppose that
	\begin{displaymath}
	\sum_{i=1}^m \lambda_{i}B^i=0,
	\end{displaymath} 
	with $\lambda_i\in \Z$.
	If we look at the $j$-th component, we observe that
	\begin{displaymath}
		0=\sum_{i=1}^m \lambda_{i}b_{j,i}=d_j \sum_{i=1}^m \lambda_{i}a_{j,i}',
	\end{displaymath}
	hence, we have 
	\begin{displaymath}
	\sum_{i=1}^m \lambda_{i}a_{j,i}'=0,
	\end{displaymath}
	that is, the same linear equation is satisfied by $A'$. 
	Finally, if $A'$ satisfies de $\Z$--columns condition, then in particular it satisfies the $n$--columns condition.
	\end{proof}

	We are now ready to prove Theorem~\ref{t.num_sol_bound_tor}.

	\begin{proof}[Proof of Theorem~\ref{t.num_sol_bound_tor}]
	Let $G$ be a sufficiently large group with exponent $n$. Let $\chi:G\setminus \{ 0\} \to [r]$ be an $r$--colouring of the nonzero elements of $G$. Let $A$ be a $k\times m$ integer matrix satisfying the $n$--columns condition. By Proposition~\ref{p.dettal_cond_rem} we may assume that $d_k(A)=1$. 

	For each divisor $d$ of $n$, denote by $M_d$ the positive integer given by Lemma~\ref{l.cycl_1_solution} such that every $r$--coloring of the nonzero elements $\Z_d^{M_d}$ has a monochromatic solution to the homogeneous linear system $Ax=0$ in $\Z_d^{M_d}\setminus \{ 0\}$ with every entry of $x$ with order $d$. Denote by
	$$
	M=\max_{d|n} M_d.
	$$
	For   fixed   $n$, every sufficiently large abelian group  with exponent $n$ contains a subgroup of the form $\Z_d^M$ for some divisor $d|n$. Let $n'$ be the largest divisor $d|n$ such that $G$ contains a subgroup isomorphic to $\Z_{d}^{M}$ and let $G'$ be the largest subgroup with exponent $n'$ in $G$. We observe  that
	$$
	|G'|\ge c_2|G|,
	$$
	where $c_2^{-1}=\prod_{d|n} d^M$, which depends only on $n$ and $M$. Indeed, if a divisor $d$ of $n$ does not divide $n'$ then, by the definition of $n'$, $G$ contains the product of at most $M$ copies of $\Z_d$. We also note that, as $n'|n$, the matrix  $A$ satisfies the $n'$--columns condition.

	Let $Y_1,\ldots,Y_r$ be the partition of $G'\setminus \{ 0\}$ defined by the coloring $\chi$.  By applying Lemma~\ref{l.rm_eqsys_ab} $r$ times, one for each color,  with $X_1=\cdots =X_m=Y_i$, for a given $\epsilon>0$ to be specified later there is $\delta =\delta (\epsilon, A)>0$ such that, if the system $Ax=0$ has less than $\delta |G'|^{m-k}$ monochromatic solutions, then there are  subsets $Y'_i\subset Y_i$ with $|Y'_{i}|\le \epsilon m |G'|$ such that the system $Ax=0$ has no solutions in $\cup_{i=1}^r (Y_i\setminus Y'_i)$. In particular there are at most $|\cup_{i=1}^r Y'_i|\le \epsilon rm|G'|$ removed elements with   order $n'$.

	Let $a\in \bigcup_{i=1}^r Y_i'$ be a removed element of order $n'$. The number of subgroups of $G'$ isomorphic to $\Z_{n'}^M$ which contain $a$ is the same, by Lemma~\ref{p.quo_cycl_max}, as the number $\stiny{G'/\Z_{n'}}{M-1}_{n'}$ of subgroups isomorphic to $\Z_{n'}^{M-1}$ in  $G'/\Z_{n'}$.  Therefore, by choosing
	$\epsilon$ in Lemma ~\ref{l.rm_eqsys_ab} such that
	\begin{equation}\label{eq:eps}
	0<\epsilon <\stiny{G'}{M}_{n'}/(rm |G'| \stiny{G'/\Z_{n'}}{M-1}_{n'}),
	\end{equation}
	there is a subgroup of $G'$ isomorphic to $\Z_{n'}^M$ with no element of order $n'$ removed. By Lemma~\ref{l.cycl_1_solution}, there is a monochromatic solution to $Ax=0$ in this subgroup, contradicting Lemma~\ref{l.rm_eqsys_ab}.  Thus there are at least $\delta |G'|^{m-k}\ge \delta c_2^{m-k}|G|^{m-k}$ monochromatic solutions. 

	We note that, by Lemma~\ref{l.cycl_1_solution}, 
	$$
	\frac{\stiny{G'}{M}_{n'}}{rm |G'| \stiny{G/\Z_{n'}}{M-1}_{n'}} \ge \frac{c_1}{rm},
	$$
	so that $\epsilon$ can be chosen independently of $|G'|$ for sufficiently large $|G'|$. 
	The Theorem follows by taking $c=\min\{\delta c_2^{m-k}, c_1 r^{-1}m^{-1}\}$. We note that $\delta$ depends only on $\epsilon$ and $A$, whereas $\epsilon$ depends on $c_1$, $r$ and $m$. The constants $c_1$ and $c_2$ depend on $n$ and $M$, while $M$, by Lemma~\ref{l.rm_eqsys_ab}, depends only on $r,n$ and $m$. We finally observe that the statement holds if we only consider matrices with coefficients in $\Z_n$. Since the number of $k\times m$ matrices with coefficients in $\Z_n$ is finite, the dependency on $A$ can be expressed as a dependency on $m$ and $n$ alone. 
	\end{proof}


	\section{Density version} \label{s.dens}

	In this section we characterize the $k\times m$ integer matrices such that, for every finite abelian group $G$, every set $S$ with positive density, $|S|>\epsilon |G|$ for some $\epsilon>0$, contains at least $\delta|G|^{m-k}$ solutions for some $\delta=\delta(\epsilon)>0$. This result is analogous to the version of Varnavides~\cite{var59} of the Szemer\'edi's theorem \cite{szem75} on arbitrarily long arithmetic progressions in dense sets of the integers. In this case every set of integers with positive asymptotic upper density contains  $cN^2$ $k$-arithmetic progressions for some positive constant $c$ which depends only on $k$.

	We say that a $(k\times m)$ matrix $A$ with integer coefficients and $m\ge
	k+2$ is {\it density regular} if, for every $\epsilon
	>0$ there is $n(\epsilon)\in \N$ such that the following holds: for
	every abelian group $G$ of order $n\ge n(\epsilon)$ and every subset
	$X\subset G$ such that $|X|\ge \epsilon n$, there is a nontrivial
	solution of the homogeneous linear system $Ax=0$ with all
	coordinates in $X$. Here by  trivial solution we mean one with all
	coordinates equal to the same common value.

	In the terminology of Rado's characterization of partition regular
	matrices, we say that the $k\times m$ integer matrix $A$, with $m\geq k+2$, verifies the {\it
	strong column} condition if the sum of the columns is the zero
	vector in $\Z^k$. Our main result is  the following:


	 \begin{theorem}[Counting for dense sets] \label{t.num_sol_dens}
		Let $A$ be a $k\times m$  integer matrix. Assume that $A$ satisfies the strong column condition. For every $\epsilon>0$, there exist a $\delta=\delta(\epsilon,A)>0$ with the following property:
	for every finite abelian group $G$ with large enough $|G|$ and for every set $X\subset G$ with $|X|\geq \epsilon |G|$, the linear system $Ax=0$ has $ \delta |G|^{m-k}$ solutions with $x\in X^m$.

	Moreover, if the matrix $A$ does not satisfy the strong column condition then $A$ is not density regular.
	\end{theorem}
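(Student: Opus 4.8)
The plan is to handle the two implications with different tools. The counting statement I would derive from the Removal Lemma for abelian groups (Lemma~\ref{l.rm_eqsys_ab}) together with the observation that the ``diagonal'' solutions forced by the strong column condition cannot be destroyed by deleting few elements; the converse I would settle by an explicit interval (Bohr-set) construction in $\Z_N$.

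For the positive direction I would first reduce to the case $d_k(A)=1$, so that Lemma~\ref{l.rm_eqsys_ab} applies to \emph{every} finite abelian group $G$ (its coprimality hypothesis becomes vacuous). The strong column condition says $\sum_{j=1}^m A^j=0$ over $\Z$, which is in particular a one-block $\Z$-columns condition, so the Smith-normal-form reduction used in the proof of Proposition~\ref{p.dettal_cond_rem} yields a matrix $A'$ with $d_k(A')=1$ whose solution set over any $G$ is contained in that of $A$. The point to verify is that this reduction preserves the vanishing column sum: if $A'$ is obtained from $A$ by integer row operations $U$ followed by dividing the $i$-th row by $d_i$, then $\sum_j (UA)^j=U\sum_j A^j=0$ and the subsequent row rescalings keep every row sum zero, so $\sum_j (A')^j=0$. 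Since a lower bound on the number of $A'$-solutions in $X^m$ is also one for $A$, I may assume outright that $d_k(A)=1$ and $\sum_j A^j=0$. Now fix $\epsilon>0$, set $\epsilon'=\epsilon/(2m)$, and let $\delta=\delta(\epsilon',A)$ be the constant of Lemma~\ref{l.rm_eqsys_ab}. Suppose, for contradiction, that some $X$ with $|X|\ge \epsilon|G|$ had at most $\delta|G|^{m-k}$ solutions of $Ax=0$ in $X^m$. Applying the Removal Lemma with $X_1=\cdots=X_m=X$ gives sets $X_i'\subseteq X$, $|X_i'|\le \epsilon'|G|$, with no solution having $x_i\in X\setminus X_i'$ for all $i$. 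But $\sum_j A^j=0$ makes $(g,\ldots,g)$ a solution in $X^m$ for every $g\in X$, and it survives unless $g\in\bigcup_i X_i'$; hence $X\subseteq\bigcup_i X_i'$, forcing $\epsilon|G|\le |X|\le m\epsilon'|G|=\tfrac12\epsilon|G|$, a contradiction. Thus $Ax=0$ has more than $\delta|G|^{m-k}$ solutions in $X^m$. (The hypothesis $m\ge k+2$ makes these diagonal solutions of lower order than $\delta|G|^{m-k}$, so the regime is non-degenerate.)

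For the converse, assume $A$ fails the strong column condition, so some row $i_0$ has $s:=\sum_{j=1}^m a_{i_0 j}\neq 0$, and consider the linear form $L(x)=\sum_{j=1}^m a_{i_0 j}x_j$. In $G=\Z_N$, identified with $\{0,\ldots,N-1\}$, I would take the window $X=\{x:\ x/N\in[\theta^*-\rho,\theta^*+\rho]\}$ with $\theta^*=1/(2|s|)\in(0,1)$ and $\rho>0$ small enough that $\rho\sum_j|a_{i_0 j}|<\tfrac14$. For $x\in X^m$, writing $x_j=\theta_j N$ one gets $L(x)/N=s\theta^*+\sum_j a_{i_0 j}(\theta_j-\theta^*)$, which lies within $\tfrac14$ of the half-integer $s\theta^*=\pm\tfrac12$ and therefore avoids every integer; hence $L(x)\not\equiv 0\pmod N$ and $x$ is not a solution of $Ax=0$. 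So this $X$, of fixed density $\approx 2\rho$ and available for arbitrarily large $N$, contains no solution at all, witnessing that $A$ is not density regular.

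The genuine depth here lives entirely in the cited Removal Lemma for abelian groups; granting it, the counting direction is short. The only delicate bookkeeping I anticipate is confirming that the determinantal reduction preserves the vanishing of the column sum — so that the diagonal solutions remain solutions of the reduced system — and arranging the window in the converse to respect the integer representatives of $\Z_N$, so that no wrap-around spoils the estimate on $L(x)$.
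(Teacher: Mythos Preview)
Your argument for the positive direction is exactly the paper's: reduce to $d_k(A)=1$ via Proposition~\ref{p.dettal_cond_rem} (checking that the strong column condition survives the row operations and row rescalings), then apply the Removal Lemma~\ref{l.rm_eqsys_ab} and observe that the diagonal solutions $(g,\ldots,g)$ cannot all be killed by deleting fewer than $|X|$ elements.

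For the converse you diverge from the paper only in the choice of the solution-free set in $\Z_N$. The paper picks the residue class $1\bmod(|\alpha|+1)$ inside a short initial segment $[0,n/(mt)]$ (so that $a\cdot u'\equiv\alpha\not\equiv 0\pmod{|\alpha|+1}$ as an integer, and the segment is short enough to rule out nonzero multiples of $N$), whereas you take a short interval centred at $N/(2|s|)$ and argue that $L(x)/N$ lands within $\tfrac14$ of the half-integer $\pm\tfrac12$. Both constructions are correct and equally elementary; yours is perhaps a bit cleaner conceptually (a Bohr-set obstruction), while the paper's congruence trick makes the non-vanishing of $a\cdot u'$ visible modulo a fixed small modulus before handling the wrap-around. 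Either way, the substance is the same: exhibit a fixed-density subset of $\Z_N$ on which one row of $A$ has no solutions.
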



	\begin{proof}[Proof of Theorem~\ref{t.num_sol_dens}]
	Assume $A$ satisfies the strong column condition. 
	Note that the strong column condition, $\sum_{i=1}^m A^i=0$, can be expressed as a linear combination with integer coefficients of the columns of $A$. Therefore, if $d_k(A)>1$, we use Proposition~\ref{p.dettal_cond_rem} to obtain a matrix $A'$ with $d_k(A')=1$, satisfying the strong column condition as well and whose solution set is a subset of the ones in $Ax=0$. We thus may assume that $d_k(A)=1$.

	Let $G$ be an abelian group. Let $\epsilon>0$ and let $X$ be a set with $|X|\ge \epsilon |G|$. Then we can find  trivial solutions to $Ax=0$, namely $x=(x_0,\ldots,x_0)$, for each $x_0\in X$.

	By the Removal Lemma for linear systems in  abelian groups, Lemma~\ref{l.rm_eqsys_ab},  there exists a $\delta=\delta(m^{-1}\epsilon/2,A)>0$ such that,  if there are less than $\delta |G|^{m-k}$ solutions to $Ax=0$, $x\in X^m$, then we can destroy all these solutions by removing at most $\epsilon/2 |G|$ elements from $X$. However, as we have not removed all the elements from $X$, there are, still, some trivial solutions. Therefore, the total number of solutions must be larger than $\delta |G|^{m-k}$.

	For the second part of the Theorem, suppose that there is one equation
	$a_1x_1+\cdots +a_mx_m=0$ with $\sum_ia_i=\alpha \neq 0$. Choose a
	sufficiently large positive integer $n$ and consider $G$ to be the cyclic
	group $\Z_n$. Let $X\subset \Z_n$ consist of the elements whose
	representatives in $[0,n]$ are congruent to $1$ modulo $|\alpha| +1$
	and lie in an initial segment $[0, n_0]$, where $n_0=n/(mt)$ and
	$t=\max_i|a_i|$. Thus $|X|\ge n/(mt (|\alpha|+1))$. Every element in $X^m$ is
	of the form $u'=u(|\alpha| +1)+\mathbf{1}$, where $u$ is an integer valued
	$m$--vector and $\mathbf{1}$ is the all ones vector. Hence, if
	$a=(a_1,\ldots ,a_m)$ and $u'\in X^m$, we have $$a\cdot
	u'=(a\cdot u)(|\alpha| +1)+\alpha.$$  Since $a\cdot u$ is an integer,
	$a\cdot u'$ cannot be equal to zero. Moreover, $u'$ is nonzero modulo $r$ because the elements in $X$ are in $[1,r_0]$, so $a\cdot u'\in [-r+1,r-1]$. Thus the equation $a_1x_1+\cdots
		+a_mx_m=0$ has no solutions in $X$.
	\end{proof}

\section{Final Remarks}\label{sec:conclusions}

We close this paper with some remarks on the hypothesis of our main result Theorem~\ref{t.num_sol_bound_tor}, and on the proof of the Ramsey result in Section \ref{sec:ramsey}.

 Our first remark concerns the hypothesis on the $n$--columns condition in the main result, Theorem~\ref{t.num_sol_bound_tor}.
In the original result of Rado on monochromatic solutions of linear systems on $\Z$, the columns condition is necessary and sufficient. The $p$--colorings which show the necessity of the condition cannot be translated to the case of finite abelian groups. 

When $n=p$ is a prime, then the $\Z_p$--columns condition is again necessary (Theorem~\ref{t.ff_1_solution}). The colorings which show the necessity, however, cannot be carried over to  the case of abelian groups with bounded exponent $n$ and $n$ non prime. More precisely, these colorings  only ensure that, if a matrix $A$ does not satisfies the $n$--column condition, then it has no monochromatic solutions in a set of the form $F(x_1,\ldots ,x_m)$ as described in Lemma~\ref{lem.find_sol_in_struct}. It is not clear to us if the $n$--columns condition is necessary for the conclusion of Theorem~\ref{t.num_sol_bound_tor} to hold.

On the other hand, there are weaker generalizations of the condition of \cite[Theorem~2.4]{berdeuhind92} that do not work. For example, just requiring that the given matrix $A$ satisfies the $p$--columns condition for every prime divisor $p$ of $n$ is not enough. For instance, the following matrix 
	\begin{equation*}
	A= \left(
	\begin{array}{cccc}
	1 & 0 & -1 & 0 \\
	0 & 1 & -1 & 0 \\
	0 & 0 & 0  & 2
	\end{array} \right).
\end{equation*}
satisfies  the $2$-columns condition, as the sum of the columns is $0$ with the coefficients in $\mathbb{Z}_2$. However, there is no solution with $x_4\in \mathbb{Z}_4^N\setminus \mathbb{Z}_2^N$, as that variable must satisfy $2\cdot x_4=0 \mod 4$ in all the $N$ coordinates and thus it has not order $4$.
If we use the $2$-coloring defined as: $\chi(x)=1$ if $x$ has order $4$ and $\chi(x)=2$ if $x$ has order $2$, there are only monochromatic solutions of order $2$. The number of solutions is $(2^N)^3$, yet is not as large as $\delta (4^N)^2$ that, for $N$ large enough, the corresponding version of 
Theorem~\ref{t.num_sol_bound_tor} would output.


Our second remark concerns the version of the Vector Space Ramsey Theorem we have used in our proof of the Ramsey result in Section~\ref{sec:ramsey}. In the terminology of Ramsey Theory, the Vector Space Ramsey Theorem is equivalent to say that the class of vector spaces is a Ramsey class. Even if Deuber and Rothschild~\cite{deurot78} show that the class of Finite Abelian Groups is not a Ramsey class,  Voigt \cite{voi80} characterizes the abelian groups $H$ for which the class of Finite Abelian Groups has the partition property with respect to $H$. This means that for any $G'$, there exist a $G$ such that, for every coloring of the subgroups of $G$ isomorphic to $H$, there is a subgroup isomorphic to $G'$ in $G$ all of its subgroups isomorphic to $H$ have the same color. In particular one can take $H$ to be a cyclic group, which is the case we are interested in. Bergelson, Deuber and Hindman \cite{berdeuhind92} give two proofs of Theorem \ref{t.ff_1_solution}, a combinatorial one and a second one based in ergodic theory. A close examination of their combinatorial proof convinced us that the simple application of the Vector Space Ramsey Theorem is not enough to reach the desired conclusion. Moreover, to adapt a complete argument to the context of finite groups seemed to be harder than to prove directly the existence of monochromatic skeletons. This prompted us to give a complete proof of Lemma~\ref{l.cycl_1_solution}, which can of course be also applied to the case of finite fields.

The third observation is connected with the restriction to the class of finite abelian groups with bounded torsion in the statement of Theorem~\ref{t.num_sol_bound_tor}. The counting result by Frankl, Graham and R\"odl \cite[Theorem~1]{frangrahrod88} provides an analogous statement for groups whose exponent  is linear with the order of the group. It is not clear to us if the two results can be combined in a  single statement for the class of all finite abelian groups. We note that
the constants involved in counting the number of solutions in both results depend heavily on the conditions on the exponent. Moreover, the proofs of the two statements are quite different, and each of them look for solutions with different structures. We believe that the conclusion of both results remains true for the whole class of finite abelian groups, but the combination of the two existent results may require new ideas.

\end{document}